\def\rk {{\operatorname{rk}}}
\def\C {{\mathbb C}}
\def\CC {{\mathbb C}}
\def\R {{\mathbb R}}
\def\Z {{\mathbb Z}}
\def\PP {{\mathbb P}}
\def\ii {{\rm i}}
\def\ee {{\rm e}}
\def\Vol {{\rm Vol}}
\theoremstyle{definition}
\newtheorem{theorem}{Theorem}[section]
\newtheorem{lemma}[theorem]{Lemma}
\newtheorem{proposition}[theorem]{Proposition}
\newtheorem{definition}[theorem]{Definition}
\newtheorem{corollary}[theorem]{Corollary}
\newtheorem{remark}[theorem]{Remark}
\numberwithin{equation}{section}
\begin{document}
\title{On Hypergeometric Duality Conjecture}

\begin{abstract}
We give an explicit formula for the duality, previously conjectured by Horja and Borisov, of two systems of GKZ hypergeometric PDEs. We prove that in the appropriate limit this duality can be identified with the inverse of the Euler characteristics pairing on cohomology of certain toric Deligne-Mumford stacks, by way of $\Gamma$-series cohomology valued solutions to the equations.  
\end{abstract}

\author{Lev Borisov}
\address{Department of Mathematics\\
Rutgers University\\
Piscataway, NJ 08854} \email{borisov@math.rutgers.edu}

\author{Zengrui Han}
\address{Department of Mathematics\\
Rutgers University\\
Piscataway, NJ 08854} \email{zh223@math.rutgers.edu}

\maketitle

\tableofcontents
\section{Introduction}\label{intro}
Let $C$ be a finite rational polyhedral cone in a lattice $N=\Z^{\rk N}$. We assume that all ray generators of $C$ lie on a primitive hyperplane $\deg(\cdot)= 1$ where $\deg:N\to\Z$ is a linear function. This data encodes an affine toric variety
$X={\rm Spec}\, \C[N^\vee \cap C^\vee]$, with the hyperplane condition equivalent to $X$ being Gorenstein, i.e. having trivial dualizing sheaf.

\smallskip
Let $\{v_i\}_{i=1}^n$ be a set of $n$ lattice points in $C$ which includes all of its ray generators, with $\deg(v_i)=1$ for all $i$.
One can construct crepant resolutions $\PP_\Sigma \to X$ by looking at subdivisions $\Sigma$ of $C$ based on triangulations that involve some of the points $v_i$. Typically, $\PP_\Sigma$ is a smooth Deligne-Mumford stack rather than a smooth variety, with the rare exception of when
all cones in $\Sigma$ are unimodular.

\smallskip
A particular case of Kawamata-Orlov $K\to D$ conjecture asserts that the derived categories of coherent sheaves on $\PP_\Sigma$ are independent of the choice of $\Sigma$. In fact, it is expected that there is an isotrivial family of triangulated categories which interpolates between the categories in question. This rather mysterious family is well understood at the level of complexified Grothendieck $K$-groups. Namely, these should correspond to solutions of a certain version of the Gel'fand-Kapranov-Zelevinsky system of hypergeometric PDEs. In fact, due to non-compactness of $X$ and $\PP_\Sigma$, there are two such systems, denoted by $\mathrm{bbGKZ}(C,0)$ and $\mathrm{bbGKZ}(C^\circ,0)$, conjecturally dual to each other \cite{BHconj}. In the appropriate limit that corresponds to the triangulation $\Sigma$, solutions to these systems can be identified with usual and compactly supported orbifold cohomology of $\PP_\Sigma$ by means of two special $\Gamma$- series.
In this paper we settle positively the duality conjecture of  \cite{BHconj}. In fact, our duality formula is simple enough to hope that it may provide hints as to how one could try to construct the aforementioned triangulated categories.

\smallskip
We will now set up the notations and review the better-behaved GKZ hypergeometric systems.
\begin{definition}\label{defGKZ}
	Consider the system of partial differential equations on the collection of functions $\{\Phi_c(x_1,\ldots,x_n)\}$ in complex variables $x_1,\ldots, x_n$, indexed by the lattice points in $C$:
	\begin{align*}
		\partial_i\Phi_c=\Phi_{c+v_i},\quad \sum_{i=1}^n\langle\mu,v_i\rangle x_i\partial_i\Phi_c+\langle\mu,c\rangle\Phi_c=0
	\end{align*}
	for all $\mu\in N^{\vee}$, $c\in C$ and $i=1,\ldots,n$. We denote this system by $\mathrm{bbGKZ}(C,0)$. Similarly by considering lattice points in the interior $C^{\circ}$ only, we can define $\mathrm{bbGKZ}(C^{\circ},0)$.
\end{definition}

\smallskip
This system gives a holonomic system of PDEs. It follows from the general theory of holonomic $D$-modules that its rank (i.e., the dimension of the solution space) is finite. For more background on this, we refer to \cite{HTT}. In contrast to the usual GKZ system where rank jumps may occur at non-generic parameters (see \cite{MMW}), it is proved in \cite{BH} that the better-behaved GKZ systems always have the expected rank which is equal to the normalized volume of the convex hull of ray generators of the cone $C$. 

\smallskip
It has been previously conjectured in \cite{BHconj} that the systems $\mathrm{bbGKZ}(C,0)$ and $\mathrm{bbGKZ}(C^{\circ},0)$ are dual to each other, in the sense that there is a pairing $\langle\cdot,\cdot\rangle$ between solutions $\Phi=(\Phi_c)$ and $\Psi=(\Psi_d)$ thereof in the form
$$
\langle \Phi,\Psi \rangle = \sum_{c,d} p_{c,d}({\bf x}) \Phi_c \Psi_d,
$$
where $p_{c,d}$ are polynomials in ${\bf x}$, with only finitely many of them nonzero. This pairing should be constant in $\bf x$ and could be viewed as the duality of the local systems of solutions. A nontrivial example of this duality has been verified in \cite{BHconj} and the
$\rk(N)=2$  case has been settled affirmatively in \cite{BHW}. 
Moreover, in certain regions of ${\bf x}$ that roughly  correspond to the complexified K\"ahler cones of $\PP_\Sigma$, one can construct solutions of 
$\mathrm{bbGKZ}(C,0)$ and $\mathrm{bbGKZ}(C^{\circ},0)$ with values in certain cohomology or $K$-theory groups of $\PP_\Sigma$.
Then it was conjectured in \cite{BHconj} that the above pairing should give (up to a constant) the inverse of a certain Euler characteristic pairing on these spaces. 
In this paper we are able to verify both statements and thus prove Conjecture 7.3 of \cite{BHconj} in full generality. 

\smallskip
Specifically,
 the following formula provides the pairing in question. Let $v\in C^\circ$ be an element in general position. 
For a subset $I\subseteq \{1,\ldots,n\}$ of size $\rk N$ we consider the cone
$\sigma_I=\sum_{i\in I} \R_{\geq 0} v_i$. We define the coefficients $\xi_{c,d,I}$ for $c+d = v_I$ as
\begin{align*}%\label{beta-good}
    \xi_{c,d,I}=\left\{
    \begin{array}{ll}
         & (-1)^{\operatorname{deg}(c)}, \text{ if } \dim\sigma_{I}=\operatorname{rk}N \text{ and both } c+\varepsilon v\text{ and }d-\varepsilon v\in\sigma_{I}^{\circ} \\
         & 0, \text{ otherwise}.
    \end{array}
    \right.
\end{align*}
Here the condition has to hold for all sufficiently small $\varepsilon>0$. As usual, we denote by $\operatorname{Vol}_I$ the absolute value of the determinant of the matrix of coefficients of $v_i,~i\in I$ in a basis of $N$ (i.e., the normalized volume of $I$). We can now formulate the first result of this paper.

\medskip
\smallskip\noindent
{\bf Theorem \ref{main theorem 1}.}
For any pair of solutions $(\Phi_c)$ and $(\Psi_d)$ of $\operatorname{bbGKZ}(C,0)$ and $\operatorname{bbGKZ}(C^{\circ},0)$ respectively, the pairing
\begin{align*}
    \langle\Phi,\Psi\rangle=\sum_{c,d,I}\xi_{c,d,I}\operatorname{Vol}_I\left(\prod_{i\in I}x_i\right) \Phi_c\Psi_d
\end{align*}
is a constant. 

\smallskip
As was mentioned before, for a regular triangulation $\Sigma$ there is a description of solutions to $\operatorname{bbGKZ}(C,0)$ and $\operatorname{bbGKZ}(C^{\circ},0)$ in terms of the Gamma series $\Gamma = (\Gamma_c)$ and 
$\Gamma^\circ = (\Gamma^\circ_d)$  with values in certain orbifold cohomology spaces $H$ and $H^c$ associated to $\PP_\Sigma$, considered in \cite{BHconj}. Then the second main result of the paper is the following.

\medskip
\smallskip\noindent
{\bf Theorem \ref{main theorem 2}.}
The constant pairing $\langle \Gamma,\Gamma^\circ \rangle$ is equal up to a constant factor to the inverse of the Euler characteristic pairing $\chi(-,-):H\otimes H^c\rightarrow\C$.

\medskip
The paper is organized as follows. In Section \ref{sec.pairing} we prove the above Theorem  \ref{main theorem 1}. In Section \ref{sec.pairing-gamma} we introduce the spaces $H$ and $H^c$, the solutions $\Gamma$ and $\Gamma^c$ with values in them and compute the pairing of Theorem \ref{main theorem 1} on them. We also calculate the asymptotic behavior of the series and their pairing in the large K\"ahler limit, which is used in the next section. In Section \ref{sec.euler} we prove that this pairing is the inverse of the Euler characteristic pairing between $H$ and $H^c$. This, in particular, implies that the pairing of Theorem \ref{main theorem 1} is nondegenerate. Finally, in Section \ref{sec.generic} we explain some easy extensions of our results and state some open questions.

%\smallskip\noindent
%{\bf Acknowledgements.} Our pairing is inspired by the Fulton-Sturmfels formula for the resolution of the diagonal in toric varieties \cite{FS}.

\section{Pairing of solutions}\label{sec.pairing}
The goal of this section is to define a pairing between the solution spaces of the better-behaved GKZ systems associated to $C$ and $C^\circ$. We first study a particular class of pairings and find a sufficient condition to make it give a constant for any pair of solutions of better-behaved GKZ systems. Then we provide a special example of this pairing, inspired by the fan displacement formula for the resolution of the diagonal in toric varieties, due to Fulton and Sturmfels \cite{FS}.

\smallskip
To state the first main result of this section, we first introduce some notations. Suppose $J$ is a subset of $\{1,2,\ldots,n\}$ with $|J|=\operatorname{rk}N+1$. We will call such subset \emph{spanning} if $\{v_i,i\in J\}$ spans $N_\R$ over $\R$. For a spanning set $J$
 there is a unique (up to multiplication by a constant factor) linear relation among the vectors $\{v_i\}_{i\in J}$
\begin{align*}
	\sum_{i\in J}a_i v_i=0.
\end{align*}
We introduce
 $\operatorname{sgn}:J\rightarrow\{0,\pm 1\}$ by $\operatorname{sgn}(j)$ being $-1$, $0$ or $1$ if $a_i$ is negative, zero or positive, respectively. This gives a decomposition $J=J_+\sqcup J_-\sqcup J_0$ of the spanning set $J$. Note that while $\operatorname{sgn}$ depends on the choice of scaling of the above linear relation, the expressions $\operatorname{sgn}(j_1)\operatorname{sgn}(j_2)$ are well-defined.
 
\smallskip
The following lemma will be used later in this section. For a subset $I\subseteq \{1,\ldots,n\}$ of size $\rk \,N$ we denote by $\Vol_I$ the normalized volume of the convex hull of the origin and $v_i,{i\in I}$. 
\begin{lemma}\label{signs}
Let $I\subset \{1,\ldots, n\}$ be such that $\{v_i, i\in I\}$ form a basis of $N_\R$. Suppose that $I$ contains $1$ and consider  $j\not\in I$. Consider the spanning set $J=I\cup \{j\}$.
Let $\mu$ denote the unique linear function  that takes value $\Vol_I$ on $v_1$ and $0$ on $v_i,i\in I\setminus 1$.
Then $\mu(v_j)= -\operatorname{sgn}(1)\operatorname{sgn}(j)\Vol_{J\setminus 1}$ for the $\operatorname{sgn}$ defined for $J$.
\end{lemma}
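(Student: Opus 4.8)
The plan is to reduce the statement to an identity about determinants, using the fact that both $\mu(v_j)$ and the coefficients $a_i$ in the linear relation $\sum_{i\in J}a_iv_i=0$ can be written as (signed) determinants by Cramer's rule. First I would fix a basis of $N_\R$ and let $M$ be the matrix whose columns are the vectors $v_i$, $i\in I$, listed so that $v_1$ is the first column; then $\det M = \pm\Vol_I$, and the linear function $\mu$ taking value $\Vol_I$ on $v_1$ and $0$ on the other $v_i$ has coordinate vector given (up to the overall sign of $\det M$) by the first row of $M^{-1}$, i.e. by the cofactors of the first column of $M$. Evaluating $\mu$ on $v_j$ therefore produces, up to the sign of $\det M$, the determinant of the matrix obtained from $M$ by replacing its first column by $v_j$; this determinant is $\pm\Vol_{(I\setminus 1)\cup\{j\}} = \pm\Vol_{J\setminus 1}$. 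So $\mu(v_j) = \pm\Vol_{J\setminus 1}$ and the whole content of the lemma is the determination of that sign.

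Next I would pin down the sign by expressing the linear relation in the same determinantal language. Expanding the $(\rk N)\times(\rk N+1)$ matrix with columns $\{v_i\}_{i\in J}$, the relation $\sum_{i\in J}a_iv_i = 0$ has (up to a common scalar) $a_i = (-1)^{?}\det(v_k : k\in J\setminus i)$ with an alternating sign depending on the position of $i$ in the ordering of $J$; in particular $a_1$ is proportional to $\pm\det(v_k : k\in I) = \pm\Vol_I$ and $a_j$ is proportional to $\pm\det(v_k : k\in J\setminus j)$. The ratio $a_j/a_1$ is then, up to an explicit alternating sign coming from the positions of $1$ and $j$ in $J$, equal to $\pm\Vol_{J\setminus 1}/\Vol_I$ times a determinant that I will match against the one computed for $\mu(v_j)$ in the previous paragraph. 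Since $\operatorname{sgn}(1)\operatorname{sgn}(j)$ is exactly the sign of $a_1a_j$ (equivalently of $a_j/a_1$), carefully bookkeeping these cofactor signs gives $\mu(v_j) = -\operatorname{sgn}(1)\operatorname{sgn}(j)\Vol_{J\setminus 1}$.

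The main obstacle is purely the sign bookkeeping: one must be consistent about (i) the sign of $\det M$ relative to $\Vol_I$, (ii) the alternating signs in the cofactor/Cramer expansions, and (iii) the position-dependent signs in the expansion of the linear relation over $J$. A clean way to avoid ad hoc case analysis is to verify the formula first in the base case where $\{v_i\}_{i\in I}$ is the standard basis $e_1,\dots,e_{\rk N}$ (so $\Vol_I=1$, $\mu$ is the first coordinate functional, and writing $v_j=\sum_i c_ie_i$ one reads off $a_1 : a_j = c_1 : (-1)$, hence $\operatorname{sgn}(1)\operatorname{sgn}(j) = -\operatorname{sgn}(c_1)$ and $\mu(v_j)=c_1=-\operatorname{sgn}(1)\operatorname{sgn}(j)|c_1| = -\operatorname{sgn}(1)\operatorname{sgn}(j)\Vol_{J\setminus 1}$), and then to check that both sides transform the same way under an arbitrary $g\in GL(N_\R)$: $\mu(v_j)$ scales by $|\det g|$ on the right-hand side through $\Vol_I$ and $\Vol_{J\setminus 1}$, while the signs $\operatorname{sgn}(1)\operatorname{sgn}(j)$ and the normalized volumes are all $GL$-covariant in the same degree, so the identity propagates from the base case to all $I$. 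This change-of-basis argument turns the sign chase into a one-line check plus a covariance statement.
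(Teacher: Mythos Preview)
Your proposal is correct, but it takes a noticeably longer route than the paper. The paper's proof is essentially two lines: normalize the relation so that $a_j = 1$ (possible since $\{v_i : i \in I\}$ is a basis, hence $a_j \neq 0$), then apply $\mu$ directly to $\sum_{i\in J} a_i v_i = 0$. Since $\mu$ vanishes on $v_i$ for $i \in I \setminus 1$ and $\mu(v_1) = \Vol_I$, this yields $a_1 \Vol_I + \mu(v_j) = 0$, so $\mu(v_j)$ and $a_1$ have opposite signs; combined with $|\mu(v_j)| = \Vol_{J\setminus 1}$ (which you also observe) and $\operatorname{sgn}(j)=1$, the claim follows. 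Your route via Cramer expansions and the $GL(N_\R)$-covariance reduction is valid, and your base-case check with $v_i=e_i$ is correct, but the sign bookkeeping you flagged as the main obstacle is simply unnecessary: the sign falls out immediately once you evaluate $\mu$ on the linear relation itself rather than separately expressing $\mu(v_j)$ and the $a_i$ as cofactors and then matching them.
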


\begin{proof}
Up to sign, we can think of the linear function $\mu$ as taking a wedge product with $\Lambda_{i\in I\setminus 1}v_i$. 
Thus, $\mu(v_j) = \pm \Vol_{J\setminus 1}$ and we just need to determine the sign. Since $\{v_i, i\in I\}$ form a basis, the 
coefficient $a_j$ in the relation $\sum_{i\in J}a_i v_i=0$ is nonzero and we may consider it to be $1$, which ensures 
$\operatorname{sgn}(j)=1$. We apply $\mu$ to $\sum_{i\in J}a_i v_i=0$ to get 
$a_1 \Vol_I +  \mu(v_j) =0$. This implies that $a_1$ and $\mu(v_j)$ have opposite signs, and the definition of $\operatorname{sgn}(1)$ finishes the argument.
\end{proof}

%We pick one such relation and use the signs of the coefficients $a_i$ to decompose the set $J$ into three subsets $J=J_+\sqcup J_-\sqcup J_0$, where
%\begin{align*}
%	J_+=\{i\in J:a_i>0\},\quad J_-=\{i\in J:a_i<0\},\quad J_0=\{i\in J:a_i=0.\}
%\end{align*}
%We define a function $\operatorname{sgn}:J\rightarrow\{0,\pm 1\}$ which maps $J_+$ to $1$, $J_-$ to $-1$ and $J_0$ to $0$.

Motivated by our previous work \cite{BHW}, we will look at pairings $\langle\cdot,\cdot \rangle$ that only have monomial terms $x_I = \prod_{i\in I} x_i$ for subsets $I$ of $\{1,\cdots, n\}$ of size $\operatorname{rk} N$.
The following proposition provides a sufficient condition on the pairing being a constant.
\begin{proposition}\label{equivalent condition}
Let $\{\xi_{c,d,I}\}$ be a collection of complex numbers for all $c\in C,\,d\in C^\circ,\,I\subseteq \{1,\ldots,n\}$ such 
that $c+d = \sum_{i\in I}v_i$ and $|I|=\rk \,N$. 
Suppose that 
\begin{align*}
0 = \sum_{j\in J }\operatorname{sgn}(j)  \Big(\xi_{c-v_j, d, J \setminus j} \chi(c-v_j \in C)+\xi_{c, d-v_j, J \setminus j} \chi(d-v_j \in C^\circ) \Big)
\end{align*}
holds for all $c\in C,\,d\in C^\circ$ and all spanning subsets $J\subseteq\{1,2,\ldots,n\}$ with $|J|=\operatorname{rk}N+1$ and 
$\sum_{i\in J}v_i = c+d$. Here $\chi$ denotes the characteristic function ($1$ if the statement is true and $0$ if it is false). Then
$$
\langle \Phi, \Psi\rangle =\sum_{|I|=\operatorname{rk}N} \sum_{c+d=v_I } \xi_{c,d,I} \Vol_I x_I \Phi_c \Psi_d 
$$
is a constant for any pair of solutions $(\Phi,\Psi)$.
\end{proposition}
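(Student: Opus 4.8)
The plan is to show that $\langle\Phi,\Psi\rangle$ is killed by every partial derivative $\partial_k$, $k=1,\ldots,n$; since the solution spaces consist of holomorphic (in fact polynomial-times-power-series) functions on a connected domain, this forces $\langle\Phi,\Psi\rangle$ to be constant. So fix $k$ and compute $\partial_k\langle\Phi,\Psi\rangle$. Differentiating the term $\xi_{c,d,I}\Vol_I x_I \Phi_c\Psi_d$ produces, by the Leibniz rule, three kinds of contributions: (i) when $\partial_k$ hits $x_I$, which happens only if $k\in I$ and yields $\Vol_I \frac{x_I}{x_k}\Phi_c\Psi_d$; (ii) when $\partial_k$ hits $\Phi_c$, which by the first GKZ equation $\partial_k\Phi_c=\Phi_{c+v_k}$ gives $\Vol_I x_I \Phi_{c+v_k}\Psi_d$; and (iii) when $\partial_k$ hits $\Psi_d$, giving $\Vol_I x_I \Phi_c\Psi_{d+v_k}$. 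The strategy is to reindex the sums so that all three types of terms are expressed as coefficients multiplying a common monomial $\frac{x_J}{x_k}\cdot x_k = $ (appropriately) $x_{J'}$ times $\Phi_{c'}\Psi_{d'}$ for a fixed triple, and then show each such coefficient vanishes on the nose, using the hypothesis together with Lemma \ref{signs}.

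Concretely, I would reindex as follows. For type (ii), set $c' = c+v_k$, so the term becomes $\Vol_I x_I \Phi_{c'}\Psi_d$ with $c'+d = v_I + v_k$; writing $J = I\cup\{k\}$ this is a spanning-size set (if $k\notin I$) containing $k$, and $c' - v_k \in C$ is automatic. For type (iii) similarly $d' = d+v_k$, $J = I\cup\{k\}$, with $d'-v_k\in C^\circ$ automatic. For type (i), $k\in I$, and I want to re-express $\Vol_I\frac{x_I}{x_k}$: here $I\setminus k$ has size $\rk N -1$, and I should re-pair it with an appropriate $v_j$ — this is exactly where the fan-displacement combinatorics enters, and where Lemma \ref{signs} is used to convert ratios of normalized volumes into the sign factors $\operatorname{sgn}(j)$. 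After this bookkeeping, for each fixed spanning set $J$ with $|J| = \rk N+1$, each fixed $k\in J$, and each fixed pair $(c,d)$ with $c+d = v_J$, the coefficient of the corresponding monomial-times-$\Phi_c\Psi_d$ collects precisely a $\Vol$-weighted version of
$$
\sum_{j\in J}\operatorname{sgn}(j)\Big(\xi_{c-v_j,d,J\setminus j}\,\chi(c-v_j\in C) + \xi_{c,d-v_j,J\setminus j}\,\chi(d-v_j\in C^\circ)\Big),
$$
possibly up to an overall nonzero scalar and a choice of reference element $1\in J$ as in the lemma; by hypothesis this is zero, so $\partial_k\langle\Phi,\Psi\rangle = 0$.

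The main obstacle I anticipate is the normalization of volumes and signs in step (i): the three families of terms naturally carry different volume weights ($\Vol_{I}$ for $k\in I$ versus $\Vol_{J\setminus k}$-type weights after reindexing), and one must check that all of them become proportional with the correct $\operatorname{sgn}(j)$ coefficients relative to a single linear relation $\sum_{i\in J}a_i v_i = 0$. Lemma \ref{signs} handles exactly the conversion between the linear functional dual to a basis $I\subset J$ and the sign of the coefficient $a_j$, so the argument is to apply it with a fixed choice of reference index in $J$ (say the one playing the role of ``$1$'') and track how the $\Vol_I$ factors telescope against the $a_i$; one also needs the sign-consistency remark from the text, namely that only products $\operatorname{sgn}(j_1)\operatorname{sgn}(j_2)$ are well-defined, so the final identity should be manifestly invariant under rescaling the relation. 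A secondary but routine point is to confirm that the domain on which all the $\Phi_c,\Psi_d$ are simultaneously defined and holomorphic is connected, so that ``all partials vanish'' genuinely implies ``constant''; this follows from the structure of the solution sheaf of a holonomic system away from its singular locus, or simply from working formally/locally near a chosen base point.
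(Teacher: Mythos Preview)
Your plan has a genuine gap: you never invoke the second family of equations in Definition~\ref{defGKZ}, the torus homogeneity relations $\sum_i \langle\mu,v_i\rangle x_i\partial_i\Phi_c+\langle\mu,c\rangle\Phi_c=0$, and without them the reindexing you describe cannot close up. After applying the Leibniz rule, your type~(ii) and~(iii) terms carry the monomial $x_I$ of degree $\rk N$, while your type~(i) terms carry $x_{I\setminus k}$ of degree $\rk N-1$. These cannot be collected against a common monomial-times-$\Phi_{c'}\Psi_{d'}$ by any purely combinatorial reindexing: there is no extra factor of some $x_j$ available to ``re-pair'' $I\setminus k$ with a $v_j$, and Lemma~\ref{signs} by itself only converts volume ratios into signs --- it does not produce the missing variable. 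So as written, type~(i) simply does not match types~(ii) and~(iii), and the coefficient you are hoping to assemble never forms.

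The paper's argument fixes exactly this: for each $I\ni 1$ it chooses the linear functional $\mu$ dual to $v_1$ in the basis $\{v_i\}_{i\in I}$, and applies the homogeneity equations for $\Phi_c$ and $\Psi_d$ with this $\mu$ (together with $\mu(c)+\mu(d)=\Vol_I$) to rewrite $\Vol_I\,\Phi_c\Psi_d$ as $-\sum_{j\notin I\setminus 1} x_j\,\mu(v_j)\,(\Phi_{c+v_j}\Psi_d+\Phi_c\Psi_{d+v_j})$. This supplies the missing $x_j$, so that every surviving term now has an $x$-monomial $x_{\hat I}$ with $|\hat I|=\rk N$ and $1\notin\hat I$; only then does Lemma~\ref{signs} convert $\mu(v_j)$ into $\operatorname{sgn}(1)\operatorname{sgn}(j)\Vol_{\hat I}$, and the coefficient of each $x_{\hat I}\Phi_{\hat c}\Psi_{\hat d}$ becomes precisely the hypothesized sum over $J=\{1\}\sqcup\hat I$. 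Your ``main obstacle'' paragraph is on the right track about volumes and signs, but the missing ingredient upstream is the homogeneity relation, not fan-displacement combinatorics.
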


\begin{proof}
Without loss of generality, it suffices to show that $\partial_1\langle \Phi, \Psi\rangle=0$.
We compute it as follows
\begin{equation}\label{star}
\begin{split}
	\partial_1\langle \Phi, \Psi\rangle = \sum_{\stackrel{|I|=\operatorname{rk}N,}{c+d=v_I}} &\xi_{c,d,I} \Vol_I x_I(\Phi_{c+v_1} \Psi_d + \Phi_c\Psi_{d+v_1})\\
&+\sum_{\stackrel{1\in I,|I|=\operatorname{rk}N,}{c+d=v_I}} \xi_{c,d,I} \Vol_I x_{I\setminus 1}\Phi_c\Psi_d
\end{split}
\end{equation}
and now use relations on $\Phi$ and $\Psi$ to manipulate the second sum. For each term, let $\mu$ be the linear function given by 
$$\mu(v) = v \wedge(\Lambda_{j\in I\backslash 1}v_j)$$
under the standard identification of $\Lambda^{\rk N} N_\R \cong \R$
where we choose the order of $\{v_j:j\in I\backslash 1\}$ in the wedge product such that $\mu(v_1)=\operatorname{Vol}_I$ is positive. Note that $\mu(v_j) = 0 $ for all $j\in I\setminus 1$.

\smallskip
We use $\mu(c)+\mu(d) = \mu(\sum_{i\in I}v_i) = \Vol_I$ and add appropriate multiples of equations for $\Phi_c$ and $\Psi_d$ with this $\mu$ to get 
\begin{align*}
\Phi_c\Psi_d\Vol_I &=
 -\sum_{j\notin I\setminus 1} x_j \mu(v_j)( \Phi_{c+v_j}\Psi_d+\Phi_{c}\Psi_{d+v_j}).
\end{align*}
Thus, \eqref{star} can be rewritten as 
\begin{align*}
\partial_1\langle \Phi, \Psi\rangle =& \sum_{\stackrel{|I|=\operatorname{rk}N,}{c+d=v_I}} \xi_{c,d,I} \Vol_I x_I(\Phi_{c+v_1} \Psi_d + \Phi_c\Psi_{d+v_1})
\\&
-\sum_{\stackrel{1\in I,|I|=\operatorname{rk}N,}{c+d=v_I}} \sum_{j\notin I\setminus 1} \xi_{c,d,I}\mu(v_j) x_{I_{1\to j}}( \Phi_{c+v_j}\Psi_d+\Phi_{c}\Psi_{d+v_j})
\\
=& \sum_{\stackrel{1\not\in I, |I|=\operatorname{rk}N,}{c+d=v_I}} \xi_{c,d,I} \Vol_I x_I(\Phi_{c+v_1} \Psi_d + \Phi_c\Psi_{d+v_1})
\\
&-\sum_{\stackrel{1\in I,|I|=\operatorname{rk}N,}{c+d=v_I}} \sum_{j\notin I} \xi_{c,d,I}\mu(v_j) x_{I_{1\to j}}( \Phi_{c+v_j}\Psi_d+\Phi_{c}\Psi_{d+v_j})
\end{align*}
where we canceled the terms with $1\in I$ in the first sum with $j=1$ in the second sum. Here $I_{1\rightarrow j}=I\backslash\{1\}\cup\{j\}$. Note that $\mu$ depends on the set $I$.

\medskip
Let us now compute the coefficient at $x_{\hat I}\Phi_{\hat c}\Psi_{\hat d}$ in the above expression. This coefficient  gets contributions from the first sum with $I=\hat I$ and from the second sum with $I= \hat I \cup 1\setminus j$.
We observe that $\hat I$ has size $\operatorname{rk}N$ and  does not contain $1$.  Also note that if $\Vol_{\hat I}=0$, then the coefficient is zero. Indeed, in the second sum, $\mu(v_j)=\pm \Vol_{I_{1\rightarrow j}}$.
Finally, we must have 
$$
\hat c+ \hat d = v_J, ~J = { \{1\}\sqcup \hat I}.
$$
We look at the set $J$ which we know to be spanning, since it contains $\hat I$. 
By Lemma \ref{signs}, we see that $\mu(v_j) = \operatorname{sgn}(1)\operatorname{sgn}(j) \Vol_{\hat I}$.
%Define a function $\operatorname{sgn}:J\times J\rightarrow \{0,\pm 1\}$ as follows:
%\begin{align*}
%	\operatorname{sgn}(j_1,j_2)=\left\{
%    \begin{array}{ll}
%        & 0, \text{ if one of }j_1,j_2\text{ lies in }J_0 \\
%         & -1, \text{ if }j_1,j_2\text{ lie in the same subset }J_+\text{ or }J_- \\
%         & 1, \text{ if }j_1,j_2\text{ lie in different subset }J_+\text{ and }J_-
%    \end{array}
%    \right.
%\end{align*}
%This is a temporary notation and will be replaced by the function $\operatorname{sgn}:J\rightarrow\{0,\pm 1\}$ defined at the beginning of this section later. It's then easy to see that $\mu(v_j)=\operatorname{sgn}(j,1)\operatorname{Vol}_{j\cup I\backslash 1}$. In fact, apply the linear function $\mu$ to the affine relation $\sum_{i\in J}\alpha_i v_i=0$ then we get $\alpha_1\mu(v_1)+\alpha_j\mu(v_j)=0$, which is $\mu(v_j)=-\alpha_j^{-1}\alpha_1\operatorname{Vol}_I$, then look at the sign. If $1\in J_0$, then the set $j\cup I\backslash 1$ is linearly dependent and hence the volume is 0.
%So the equation can be rewritten as
%\begin{align*}
%    & \sum_{\stackrel{1\not\in I, |I|=\operatorname{rk}N,}{c+d=v_I}} \xi_{c,d,I}\operatorname{Vol}_I x_I(\Phi_{c+v_1} \Psi_d + \Phi_c\Psi_{d+v_1})
%\\
%&-\sum_{\stackrel{1\in I,|I|=\operatorname{rk}N,}{c+d=v_I}} \sum_{j\notin I}\operatorname{sgn}(j,1)\xi_{c,d,I}\operatorname{Vol}_{j\cup I\backslash 1} x_{I_{1\to j}}( \Phi_{c+v_j}\Psi_d+\Phi_{c}\Psi_{d+v_j})
%\end{align*}
Therefore, the first line contributes
$$
\xi_{\hat c-v_1,\hat d,\hat I}\Vol_{\hat I}  \chi(\hat c-v_1\in C)+  \xi_{\hat c,\hat d-v_1,\hat I}\Vol_{\hat I} \chi(\hat d-v_1\in C^\circ)
$$
and the second line contributes
\begin{align*}
\sum_{j\in J\setminus 1}\operatorname{sgn}(1)\operatorname{sgn}(j)\xi_{\hat c-v_j,\hat d,J\setminus j}\operatorname{Vol}_{\hat I}  \chi(\hat c-v_j\in C) 
\\
+\sum_{j\in J\setminus 1}\operatorname{sgn}(1)\operatorname{sgn}(j)\xi_{\hat c,\hat d-v_j,J\setminus j}\Vol_{\hat I}  \chi(\hat d-v_j\in C^\circ).
\end{align*}
We observe that if $\operatorname{sgn}(1)=0$, then $\{v_i, i\in J\setminus 1\}$ do not span $N_\R$, so $\Vol_{\hat I}=0$ and the statement trivially holds. Thus we can introduce $\operatorname{sgn}(1)^2$ into the first term to have the coefficient
at $x_{\hat I}\Phi_{\hat c}\Psi_{\hat d}$ equal
\begin{align*}
\operatorname{sgn}(1) \operatorname{Vol}_{\hat I} \sum_{j\in J}\operatorname{sgn}(j)\Big(\xi_{\hat c-v_j,\hat d,J\setminus j} \chi(\hat c-v_j\in C) 
+\xi_{\hat c,\hat d-v_j,J\setminus j}\chi(\hat d-v_j\in C^\circ)\Big),
\end{align*}
and the claim follows.
\end{proof}

\begin{remark}
After some sign changes, one can rephrase the condition of Proposition \ref{equivalent condition} as $d\xi=0$ for an 
appropriate element $ \xi\in\C[C]\otimes \C[C^\circ] \otimes \Lambda^{\rk N}(\oplus_{i=1}^n \C e_i)$ with the differential
$$
d = \sum_{i=1}^n [v_i]\otimes 1\otimes (e_i \wedge)+ \sum_{j=1}^n 1\otimes [v_j]\otimes (e_j \wedge)
$$
on  $ \xi\in\C[C]\otimes \C[C^\circ] \otimes \Lambda^{\bullet}(\oplus_{i=1}^n \C e_i)$.
We do not pursue this direction further in the paper.
\end{remark}

\smallskip
Now we give an explicit formula of the pairing $\langle-,-\rangle$ between solutions of the better-behaved GKZ systems $\mathrm{bbGKZ}(C,0)$ and $\mathrm{bbGKZ}(C^{\circ},0)$. We prove that $\langle\Phi,\Psi\rangle$ is a constant for any pair of solutions $\Phi$ and $\Psi$ by using Proposition \ref{equivalent condition}.

\smallskip
Fix a choice of a generic vector $v\in C^{\circ}$. For a set $I$ of size $\rk N$ we consider the cone
$\sigma_I=\sum_{i\in I} \R_{\geq 0} v_i$. We define the coefficients $\xi_{c,d,I}$ for $c+d = v_I$ as
\begin{align}\label{beta-good}
    \xi_{c,d,I}=\left\{
    \begin{array}{ll}
         & (-1)^{\operatorname{deg}(c)}, \text{ if } \dim\sigma_{I}=\operatorname{rk}N \text{ and both } c+\varepsilon v\text{ and }d-\varepsilon v\in\sigma_{I}^{\circ} \\
         & 0, \text{ otherwise}.
    \end{array}
    \right.
\end{align}
Here the condition has to hold for all sufficiently small $\varepsilon>0$. It is clear that $\xi$ is well-defined as long as the vector $v$ is chosen sufficiently generic. Note that $\xi_{c,d,I}\not=0$ implies that both $c$ and $d$ lie in the maximum-dimensional cone $\sigma_I$ (but not necessarily in its interior). 

\smallskip
We are now ready to tackle the main result of this section.
\begin{theorem}\label{main theorem 1}
For any pair of solutions $(\Phi_c)$ and $(\Psi_d)$ of $\operatorname{bbGKZ}(C,0)$ and $\operatorname{bbGKZ}(C^{\circ},0)$ respectively, the pairing
\begin{align*}
    \langle\Phi,\Psi\rangle=\sum_{c,d,I}\xi_{c,d,I}\operatorname{Vol}_I\left(\prod_{i\in I}x_i\right) \Phi_c\Psi_d
\end{align*}
is a constant.
\end{theorem}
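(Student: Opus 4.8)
The plan is to deduce Theorem~\ref{main theorem 1} from Proposition~\ref{equivalent condition}: it suffices to check that the coefficients $\xi_{c,d,I}$ defined in \eqref{beta-good} satisfy the hypothesis of that proposition. So I fix $c\in C$, $d\in C^\circ$, and a spanning subset $J\subseteq\{1,\dots,n\}$ with $|J|=\rk N+1$ and $\sum_{i\in J}v_i=c+d$, and must show the vanishing of
\[
\sum_{j\in J}\operatorname{sgn}(j)\Big(\xi_{c-v_j,d,J\setminus j}\,\chi(c-v_j\in C)+\xi_{c,d-v_j,J\setminus j}\,\chi(d-v_j\in C^\circ)\Big).
\]

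First I would observe that the two characteristic factors are automatic. If $\xi_{c-v_j,d,J\setminus j}\neq0$ then by \eqref{beta-good} $(c-v_j)+\varepsilon v\in\sigma_{J\setminus j}^\circ$ for all small $\varepsilon>0$; letting $\varepsilon\to0^+$ and using that $\sigma_{J\setminus j}$ is closed and contained in $C$ gives $c-v_j\in C$. Likewise, if $\xi_{c,d-v_j,J\setminus j}\neq0$ then $\sigma_{J\setminus j}$ is full-dimensional and $(d-v_j)-\varepsilon v$ lies in $\sigma_{J\setminus j}^\circ$, an open subset of $C$ hence of $C^\circ$; since $\varepsilon v\in C$ this yields $d-v_j=\big((d-v_j)-\varepsilon v\big)+\varepsilon v\in C^\circ$. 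As $\xi_{c-v_j,d,J\setminus j}$ carries the sign $(-1)^{\deg c-1}$ and $\xi_{c,d-v_j,J\setminus j}$ the sign $(-1)^{\deg c}$, after dividing by $(-1)^{\deg c}$ the identity to be proved becomes
\[
\sum_{j\in J}\operatorname{sgn}(j)\Big(\chi\big(\xi_{c,d-v_j,J\setminus j}\neq0\big)-\chi\big(\xi_{c-v_j,d,J\setminus j}\neq0\big)\Big)=0.
\]

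The heart of the argument is to lift this to a statement about a line meeting a cube. Let $\pi\colon\R^J\to N_\R$ be the surjection sending $e_i\mapsto v_i$, so that $\ker\pi$ is the line $L=\R\cdot(a_i)_{i\in J}$, and note that $c+d=v_J$ lifts to the all-ones vector $\mathbf 1_J$. Fixing a small generic $\varepsilon>0$, the fibre $\ell:=\pi^{-1}(c+\varepsilon v)$ is an affine line parallel to $(a_i)_{i\in J}$; for generic $v$ it is in general position with respect to all faces of the unit cube $[0,1]^J$. For $j$ with $a_j\neq0$, I would then check --- using that $c+d=v_J$ forces $c+(d-v_j)=v_{J\setminus j}$, which lets one pass between $c+\varepsilon v$ and its ``complement'' $(d-v_j)-\varepsilon v=v_{J\setminus j}-(c+\varepsilon v)$ --- that $\xi_{c,d-v_j,J\setminus j}\neq0$ exactly when the point of $\ell$ with $j$-th coordinate $0$ lies in the relative interior of the facet $\{t_j=0\}$ of $[0,1]^J$, and similarly $\xi_{c-v_j,d,J\setminus j}\neq0$ exactly when the point of $\ell$ with $j$-th coordinate $1$ lies in the relative interior of the facet $\{t_j=1\}$. (For $j\in J_0$ one has $\operatorname{sgn}(j)=0$, so these terms may be ignored.)

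Finally I would evaluate the sum by the elementary geometry of a generic line and a cube: such a line either misses $[0,1]^J$ --- in which case every term above vanishes --- or crosses $\partial[0,1]^J$ transversally in exactly two points, one ``entry'' and one ``exit'', each in the relative interior of a facet. Since $\operatorname{sgn}(j)=\operatorname{sgn}(a_j)$ records the direction in which the $j$-th coordinate of $\ell$ crosses the value $0$ or $1$, a short sign bookkeeping shows that, regardless of which facets are involved, the entry point contributes $+1$ and the exit point contributes $-1$, so the total is $0$; Proposition~\ref{equivalent condition} then gives the theorem. The main obstacle is the third step: making precise the dictionary between non-vanishing of $\xi_{\cdot,\cdot,J\setminus j}$ and the facet-crossings of $\ell$, correctly tracking the factor $(-1)^{\deg c}$, and justifying that $\ell$ is generic despite $c$ being a fixed lattice point (one must rule out, for generic $v$ and small $\varepsilon$, that $\ell$ meets a face of $[0,1]^J$ of codimension $\geq2$; here the coordinates indexed by $J_0$, which are constant along $\ell$, lie in $(0,1)$ for generic $v$, or else $\ell$ misses the cube altogether).
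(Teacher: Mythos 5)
Your proposal is correct and takes essentially the same route as the paper: it reduces to Proposition \ref{equivalent condition}, observes the redundancy of the characteristic functions, and identifies the potentially nonzero terms with the two boundary points (entry and exit) of the intersection of the affine line $l_{c+\varepsilon v}$ with the unit cube — the paper's segments $S_i$ — which cancel by the same $\operatorname{sgn}(j)$ and $(-1)^{\deg c}$ bookkeeping. The genericity issue you flag is handled in the paper exactly as you suggest: a coincidence of endpoints (a codimension-two face crossing) would force $c+\varepsilon v$ into a lattice shift of the span of $\rk N-1$ of the $v_i$, which a generic choice of $v$ rules out, and constant $J_0$-coordinates are dealt with by the same genericity.
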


\begin{proof}
We prove this theorem by showing that these coefficients $\xi_{c,d,I}$ satisfy the conditions in Proposition \ref{equivalent condition}, namely 
\begin{align*}
    0=\sum_{j \in J} \xi_{c-v_j,d, J \backslash j}\operatorname{sgn}(j) \chi\left(c-v_j \in C\right)+\sum_{j \in J} \xi_{c,d-v_j, J \backslash j}\operatorname{sgn}(j) \chi\left(d-v_j \in C^{\circ}\right) 
\end{align*}
for all spanning subsets $J\subseteq\{1,2,\cdots,n\}$ with $|J|=\operatorname{rk}N+1$, and all $c\in C,~d\in C^\circ$ with $c+d=\sum_{i\in J}v_i$. 

\smallskip
We first observe that the conditions $c-v_j \in C$ and $d-v_j \in C^{\circ}$ in the equations above are redundant.
Indeed, to ensure that $\xi_{c-v_j,d, J \backslash j}\not=0$ we must have $c-v_j+\varepsilon v\in\sigma_{J\backslash j}^{\circ}$,
which implies that $c-v_j\in \sigma_{J\backslash j}\subseteq C$.
For the second term, to ensure that $\xi_{c,d-v_j, J \backslash j}\not=0$ we must have $d-v_j-\varepsilon v\in\sigma_{J\backslash j}^{\circ}\subseteq C^{\circ}$ (since $J\backslash j$ is a maximal cone), which implies $d\in C^\circ$.
Thus, it suffices to consider the equations
\begin{align}\label{above}
    0=\sum_{j \in J_+\sqcup J_-} \xi_{c-v_j,d, J \backslash j}\operatorname{sgn}(j) +\sum_{j \in J_+\sqcup J_-} \xi_{c,d-v_j, J \backslash j}\operatorname{sgn}(j)
\end{align}
for $\xi$ defined in \eqref{beta-good}.
The nonzero terms occur for the indices $j$ such that both $c-v_j+\varepsilon v$ and $d-\varepsilon v$ lie in $\sigma_{J\backslash j}^{\circ}$, or both $c+\varepsilon v$ and $d-v_j-\varepsilon v$ lie in $\sigma_{J\backslash j}^{\circ}$.

\smallskip
We consider the equation in the variables $a_i$
\begin{align*}
    \sum_{i\in J}a_i v_i=c+\varepsilon v.
\end{align*}
The solution set to this equation is an affine line $l_{c+\varepsilon v}$ in the space $\R^{\rk N+1}$. A contribution to the first term of \eqref{above} happens when there is a point on $l_{c+\varepsilon v}$ with $a_j=1$ and all other $a_i$ lie in $(0,1)$ due to the definition of the coefficient $\xi$. Similarly, a contribution to the second term happens for $a_j=0$ and all other $a_i$ lie in $(0,1)$.

\smallskip
Recall from Lemma \ref{signs} that we have a decomposition $J=J_+\sqcup J_-\sqcup J_0$. For $i\in J_0$, the value of $a_i$ on the line $l_{c+\varepsilon v}$ is constant. Since $v$ is generic, we may assume it to be non-integer. Thus, it either prohibits any contributions to \eqref{above} (if $a_i\not\in (0,1)$) or provides no restrictions. Therefore, we may now assume that the latter happens for all $i\in J_0$.

\smallskip
The key idea of the proof is to consider the line segments 
\begin{align*}
    S_i=l_{c+\varepsilon v}\cap \{0\leq a_i\leq 1\}
\end{align*}
on $l_{c+\varepsilon v}$ for all $i\in J_+\sqcup J_-$. The nonzero contributions to \eqref{above} happen exactly for the endpoints of a line segment $S_j$ that lie strictly inside all other segments. The assumption that $\varepsilon v$ is generic implies that the endpoints of different $S_i$ do not coincide. Indeed, if it were the case, then $c+\varepsilon v$ would lie in a shift of the span of ${\rk N}-1$ of $v$-s by a lattice element, and we may assure that this does not happen. Consider now $S = \bigcap_{i\in  J_+\sqcup J_-}S_i$. If $S$ is empty then there are no contribution, since this point would not lie in the interior of other $S_i$. So it suffices to consider the case when $S$ is a segment $[p,q]$. It is clear that the only points that could contribute to \eqref{above} are $p$ and $q$. In particular, there are at most two nonzero terms in \eqref{above}. We will show that they always cancel each other.

\smallskip
We also note that the orientation of the segment $S_i$ (i.e., the direction in which the parameter $a_i$ increases) on the line $l_{c+\varepsilon v}$ is determined by $\operatorname{sgn}(i)$, since 
the vector along the line is given by the nontrivial linear relation on $v_{k,k\in J}$. If both $p$ and $q$ are the $a_i=1$ and $a_j=1$ ends of the segments $S_i$ and $S_j$, then the segments must have opposite orientations on $l_{c+\varepsilon v}$ (since they both should point towards the other point). This means that $\operatorname{sgn}(i)=-\operatorname{sgn}(j)$ and the two terms of \eqref{above} cancel.  Similarly, they cancel if $p$ and $q$ are the $a_i=0$ and $a_j=0$ ends of $S_i$ and $S_j$.

\smallskip
Now suppose that $p$ and $q$ correspond to $a_i=0$ and $a_j=1$ ends of $S_i$ and $S_j$ (in this case it is possible to have $i=j$). In this case the two segments must have the same orientation, and then the factor $(-1)^{\deg c}$ in the definition of $\xi$ ensures that the two terms cancel each other.
\end{proof}

\begin{remark}
As $v$ varies, we get a finite number of different formulas for the pairing. It is also possible to take a more uniform choice of the pairing by integrating over $v$ of degree $1$ (ignoring the contributions of measure zero set of nongeneric $v$). However, there does not appear to be any advantage in doing so. We will later see that the pairing is in fact independent of the choice of $v$.
\end{remark}

\section{Pairing of the Gamma series}\label{sec.pairing-gamma}
In this section we compute the pairing from the previous one on the cohomology-valued solutions to the better-behaved GKZ systems provided by the $\Gamma$ series. We will show in the next section that the result is the dual of the intersection pairing which provides the proof of Conjecture 7.3 from \cite{BHconj}. 

\smallskip
We consider a regular triangulation $\Sigma$ of the cone $C$ whose vertices are among these vectors $\{v_i\}_{i=1}^n$ and its corresponding toric Deligne-Mumford stack $\mathbb{P}_{\Sigma}$. 
\begin{remark}
It will be convenient for us to abuse notation and  denote by $I$ both a subset of $\{1,\ldots,n\}$ and the corresponding cone $\sum_{i\in I} \R_{\geq 0}v_i$. Similarly, $\Sigma$ denotes both a simplicial complex on  $\{1,\ldots,n\}$ and the corresponding simplicial fan in $N_\R$ which refines $C$ and its faces.
\end{remark}

\begin{definition}
	For each cone $\sigma\in\Sigma$ we define $\operatorname{Box}(\sigma)$ to be the set of lattice points $\gamma$ which can be written as $\gamma=\sum_{i\in\sigma}\gamma_i v_i$ with $0\leq\gamma_i<1$. We denote the union of all $\operatorname{Box}(\sigma)$ by $\operatorname{Box}(\Sigma)$. To each element $\gamma\in\operatorname{Box}(\Sigma)$ we associate a  \emph{twisted sector} of $\mathbb{P}_{\Sigma}$ corresponding to the minimal cone $\sigma(\gamma)$ in $\Sigma$ containing $\gamma$. 
We define the dual of a twisted sector $\gamma=\sum\gamma_i v_i$ by
\begin{align*}
	\gamma^{\vee}=\sum_{\gamma_i\not=0}(1-\gamma_i)v_i.
\end{align*}
or equivalently, the unique element in $\operatorname{Box}(\sigma(\gamma))$ that satisfies
\begin{align*}
	\gamma^{\vee}=-\gamma\hskip -3pt \mod \sum_{i\in\sigma}\Z v_i
\end{align*}
\end{definition}

\begin{remark}
The dual of $\gamma=0$ is itself. Clearly, we have $\sigma(\gamma)=\sigma(\gamma^{\vee})$ and $(\gamma^\vee)^\vee=\gamma$.
\end{remark}

\smallskip
Twisted sectors are themselves smooth toric DM stacks and the following propositions describe a  Stanley-Reisner type presentation of the spaces of cohomology and cohomology with compact support of their coarse moduli spaces, see \cite{BHconj}.

\begin{proposition}\label{coh}
As usual, $\operatorname{Star}(\sigma(\gamma))$ denotes the set of cones in $\Sigma$ that contain $\sigma(\gamma)$.
	Cohomology space $H_{\gamma}$ of the twisted sector $\gamma$ is naturally isomorphic to the quotient of the polynomial ring $\CC[D_i:i\in\operatorname{Star}(\sigma(\gamma))\backslash\sigma(\gamma)]$ by the ideal generated by the relations
	\begin{align*}
		\prod_{j\in J}D_j,\ J\not\in\operatorname{Star}(\sigma(\gamma)),\quad\text{and }\sum_{i\in\operatorname{Star}(\sigma(\gamma))\backslash\sigma(\gamma)}\mu(v_i)D_i,\ \mu\in\operatorname{Ann}(v_i,i\in\sigma(\gamma)).
	\end{align*}
We can also view $H_\gamma$ as a module over the polynomial ring $\CC[D_1,\ldots, D_n]$ by declaring $D_i=0$ for $i\not\in \operatorname{Star}(\sigma(\gamma))$ and solving (uniquely) for 
$D_i, i\in \sigma(\gamma)$ to satisfy the linear relations $\sum_{i=1}^n \mu(v_i)D_i=0$ for all $\mu\in N^\vee$.
\end{proposition}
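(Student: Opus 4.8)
The plan is to recognize the twisted sector attached to $\gamma$ as a smaller toric Deligne--Mumford stack and then to read off its cohomology from the standard Stanley--Reisner presentation. Write $\sigma=\sigma(\gamma)$, let $N_\sigma=N\cap\operatorname{span}_\R(\sigma)$, put $\overline N=N/N_\sigma$, and let $\overline\Sigma_\gamma$ be the fan in $\overline N_\R$ whose cones are the images of the cones of $\operatorname{Star}(\sigma)$ under the projection $N_\R\to\overline N_\R$. Since every $\tau\in\operatorname{Star}(\sigma)$ has the form $\sigma+\sum_{i\in\tau\setminus\sigma}\R_{\geq 0}v_i$, the cones of $\overline\Sigma_\gamma$ are exactly the $\sum_{i\in J}\R_{\geq 0}\bar v_i$ with $J\cup\{\text{rays of }\sigma\}\in\Sigma$, and its rays are the $\bar v_i$ for $i\in\operatorname{Star}(\sigma)\setminus\sigma$. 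First I would spell out the (standard) fact that the twisted sector of $\PP_\Sigma$ indexed by $\gamma$ is the toric DM stack attached to this combinatorial data, with the finite group data inherited from the inclusion $N_\sigma\subseteq\operatorname{span}_\R(\sigma)\cap N$ together with the possible non-primitivity of the $\bar v_i$, and that its coarse moduli space is the simplicial toric variety of $\overline\Sigma_\gamma$. I would also observe that since $\PP_\Sigma$ is projective over the affine toric variety $X$, this twisted sector is projective over an affine toric variety, hence semiprojective, so its rational cohomology sits in even degree and is generated by divisor classes.

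Second, I would invoke the Jurkiewicz--Danilov theorem together with its extension to semiprojective simplicial toric DM stacks --- the toric special case of the Chow-ring computation of Borisov--Chen--Smith --- which presents the rational cohomology ring of such a stack with ray generators $\{\bar u_k\}$ as $\CC[D_k]$ modulo the Stanley--Reisner ideal $\langle\prod_{k\in S}D_k:\ S\text{ not a cone}\rangle$ together with the linear ideal $\langle\sum_k\bar\mu(\bar u_k)D_k:\ \bar\mu\in\overline N^\vee\rangle$. Applying this to $\overline\Sigma_\gamma$ and translating back through the identification above yields precisely the stated presentation of $H_\gamma$: a subset $J\subseteq\operatorname{Star}(\sigma)\setminus\sigma$ fails to span a cone of $\overline\Sigma_\gamma$ exactly when $J\notin\operatorname{Star}(\sigma)$ in the paper's abuse of notation, while $\overline N^\vee$ is canonically $\operatorname{Ann}(v_i,i\in\sigma)\subseteq N^\vee$, under which $\sum_k\bar\mu(\bar v_k)D_k=\sum_{i\in\operatorname{Star}(\sigma)\setminus\sigma}\mu(v_i)D_i$.

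For the $\CC[D_1,\ldots,D_n]$-algebra structure I would define the ring homomorphism $\CC[D_1,\ldots,D_n]\to H_\gamma$ by $D_i\mapsto 0$ for $i\notin\operatorname{Star}(\sigma)$, $D_i\mapsto D_i$ for $i\in\operatorname{Star}(\sigma)\setminus\sigma$, and $D_i\mapsto-\sum_{k\in\operatorname{Star}(\sigma)\setminus\sigma}\mu_i(v_k)D_k$ for $i\in\sigma$, where $\mu_i\in N^\vee$ is chosen with $\mu_i(v_j)=\delta_{ij}$ for all $j\in\sigma$; such $\mu_i$ exists because the $v_j,\,j\in\sigma$, are linearly independent, and any two choices differ by an element of $\operatorname{Ann}(v_j,j\in\sigma)$, so the resulting image in $H_\gamma$ is well defined. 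One then checks that this is the unique assignment making $\sum_{k=1}^n\mu(v_k)D_k=0$ hold in $H_\gamma$ for every $\mu\in N^\vee$: decomposing $\mu=\mu'+\sum_{i\in\sigma}\mu(v_i)\mu_i$ with $\mu'\in\operatorname{Ann}(v_i,i\in\sigma)$ reduces such a relation to the defining ones $D_i=-\sum_{k\in\operatorname{Star}(\sigma)\setminus\sigma}\mu_i(v_k)D_k$ and to the relations $\sum_{i\in\operatorname{Star}(\sigma)\setminus\sigma}\mu'(v_i)D_i=0$ already imposed in the presentation of $H_\gamma$.

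The only step that needs genuine care is the first one: making the identification of the twisted sector with the toric DM stack of $\overline\Sigma_\gamma$ precise enough that the external cohomology presentation applies verbatim, and confirming that semiprojectivity --- hence the validity of that presentation in this non-compact situation --- is inherited by twisted sectors. Everything afterwards is bookkeeping; in the interest of brevity one may instead cite the presentation of $H_\gamma$ from \cite{BHconj} and record here only the elementary verification of the $\CC[D_1,\ldots,D_n]$-module structure.
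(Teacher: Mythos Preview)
The paper does not actually prove this proposition: it is stated as background material with the attribution ``see \cite{BHconj}'' and no argument is given. Your sketch is a correct and standard way to establish the presentation --- identifying the twisted sector with the toric DM stack of the quotient fan $\overline\Sigma_\gamma$ and reading off the Stanley--Reisner presentation --- and is presumably along the lines of what is done in \cite{BHconj}. You even anticipate this by noting that one may simply cite \cite{BHconj} for the presentation and record only the $\CC[D_1,\ldots,D_n]$-module structure, which is exactly what the paper does.

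One minor point: when you choose $\mu_i\in N^\vee$ with $\mu_i(v_j)=\delta_{ij}$ for $j\in\sigma$, such $\mu_i$ need only exist in $N^\vee_\Q$ (or $N^\vee_\C$) since the $v_j$ need not be part of a $\Z$-basis of $N$; this is harmless here because the coefficients are $\C$, but it is worth being precise.
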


\begin{proposition}\label{cohcmp}
	Cohomology space with compact support $H_{\gamma}^c$ (viewed as a module over $H_\gamma$) is generated by $F_I$ for $I\in\operatorname{Star}(\sigma(\gamma))$ such that $\sigma_I^{\circ}\subseteq C^{\circ}$ with relations
	\begin{align*}
		&\quad D_i F_I-F_{I\cup\{i\}}\text{ for }i\not\in I,I\cup\{i\}\in\operatorname{Star}(\sigma(\gamma))\\
		&\text{and }D_i F_I\text{ for }i\not\in I,I\cup\{i\}\not\in\operatorname{Star}(\sigma(\gamma))
	\end{align*}
Similarly, it is given a structure of a module over $\CC[D_1,\ldots, D_n]$.
\end{proposition}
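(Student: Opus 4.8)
The plan is to obtain the presentation from Poincar\'e--Lefschetz duality, which converts the computation of $H_\gamma^c$ into a statement about the ring $H_\gamma$ already described in Proposition~\ref{coh}. The twisted sector $\mathcal{Y}_\gamma$ is a smooth toric Deligne--Mumford stack of complex dimension $m=\rk N-\dim\sigma(\gamma)$; with $\C$-coefficients it has the same cohomology and compactly supported cohomology as its coarse moduli space $Y_\gamma$, the simplicial toric variety whose fan is the image of $\operatorname{Star}(\sigma(\gamma))$ under $N_\R\to N_\R/\langle v_i:i\in\sigma(\gamma)\rangle_\R$. Since $\Sigma$ triangulates the full-dimensional cone $C$, the cone $\sigma(\gamma)$ is a face of some full-dimensional cone of $\Sigma$, so this fan has full-dimensional support. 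A smooth DM stack (equivalently, a simplicial toric variety) satisfies Poincar\'e duality with $\Q$-coefficients, so the cap product gives a perfect pairing and an isomorphism of $H_\gamma$-modules $H^k_c(\mathcal{Y}_\gamma,\C)\cong H_{2m-k}(\mathcal{Y}_\gamma,\C)$; the $\C[D_1,\dots,D_n]$-module structure is then inherited through the recipe of Proposition~\ref{coh}.

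Next I would identify generators and relations on the homology side. For a cone $I\supseteq\sigma(\gamma)$ of $\Sigma$ write $\mathcal{V}(I)\subseteq\mathcal{Y}_\gamma$ for the corresponding closed invariant substack; it is \emph{complete} exactly when $\sigma_I^\circ\subseteq C^\circ$, because the closed star of $I$ in $\Sigma$ surjects onto $N_\R/\langle I\rangle_\R$ precisely when $\sigma_I^\circ$ lies in the interior of $C=|\Sigma|$. These are exactly the index sets appearing in the statement, and I would set $F_I:=[\mathcal{V}(I)]\in H_\gamma^c$. Since $\mathcal{Y}_\gamma$ is smooth its invariant substacks meet transversally, so the toric intersection calculus gives, for $i\notin I$, that $D_i\cdot[\mathcal{V}(I)]$ equals $[\mathcal{V}(I\cup\{i\})]=F_{I\cup\{i\}}$ when $I\cup\{i\}$ spans a cone of $\Sigma$, and vanishes otherwise because then $\mathcal{V}(\{i\})$ and $\mathcal{V}(I)$ are disjoint; and adjoining a ray of $C$ to a cone whose relative interior lies in $C^\circ$ again yields such a cone, so $F_{I\cup\{i\}}$ is again an allowed generator. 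Finally, it follows from the standard cellular structure of such toric varieties (or by retracting $\mathcal{Y}_\gamma$ onto the complete invariant substack lying over the vertex of its affinization) that $H_\gamma^c$ is generated as an $H_\gamma$-module by the classes of the complete invariant substacks $\mathcal{V}(I)$. Together, these facts yield a well-defined surjection of $H_\gamma$-modules from the module presented in the statement onto $H_\gamma^c$.

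What remains — and this is the step I expect to be the main obstacle — is that there are no further relations, i.e. that this surjection is injective. The most direct route seems to be a dimension count: by the duality above, $\dim_\C H_\gamma^c$ in degree $2m-k$ equals $\dim_\C H_\gamma$ in degree $k$, which is explicit from the presentation of Proposition~\ref{coh}, so it suffices to check that the presented module has the same dimension in each degree. Using the relations $D_iF_I=F_{I\cup\{i\}}$ one reduces all generators to the $F_I$ with $I$ a minimal interior cone (noting that any cone sandwiched between two interior cones is interior), and the relations $D_iF_I=0$ then exhibit each cyclic submodule $H_\gamma\cdot F_I$ as a quotient of $H^*(\mathcal{V}(I))$, the cohomology of a complete simplicial toric DM stack, whose dimension is known; assembling these submodules and keeping track of the overlaps $\mathcal{V}(I)\cap\mathcal{V}(I')$ gives the count. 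A cleaner alternative is to construct the pairing of $H_\gamma^c$ with $H_\gamma$ combinatorially — pairing $F_I$ against a monomial in the $D_i$ by restriction to $\mathcal{V}(I)$ followed by the degree map of the complete stack $\mathcal{V}(I)$ — show that it is an $H_\gamma$-module pairing identified with the Poincar\'e pairing, and prove its perfectness directly; nondegeneracy in the $H_\gamma^c$ variable then forces the presented module to equal $H_\gamma^c$. A further option is an induction on the number of maximal cones of $\Sigma$ via the Mayer--Vietoris sequence for compactly supported cohomology of the affine-chart cover of $\mathcal{Y}_\gamma$, the only delicate point being the bookkeeping of the interiority condition as the fan grows.
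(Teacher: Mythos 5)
The paper does not actually prove this proposition: it is quoted from \cite{BHconj} (``see \cite{BHconj}''), so there is no in-paper argument to compare against, and your proposal has to be judged as a standalone proof. As such it is a reasonable framework but not yet a proof. The setup is fine: the twisted sector is a smooth toric DM stack whose coarse space is a $\Q$-homology manifold, so Poincar\'e--Lefschetz duality $H_c^k\cong H_{2m-k}$ holds; the identification of the admissible index sets $I$ (those with $\sigma_I^\circ\subseteq C^\circ$) with the compact invariant substacks is correct; and the two families of relations do hold for the classes $[\mathcal{V}(I)]$ (modulo being careful that you work with the stacky classes so that $D_iF_I=F_{I\cup\{i\}}$ has no multiplicity correction).

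The genuine gaps are exactly the two steps you defer. First, generation: that $H_\gamma^c$ is generated as an $H_\gamma$-module by the classes of the \emph{compact} orbit closures is not a consequence of a ``standard cellular structure'' -- a non-complete toric variety has no such structure in general. What makes it true here is that $\Sigma$ is a regular triangulation, so $\PP_\Sigma$ and its twisted sectors are semiprojective; one then needs either a Bialynicki--Birula-type decomposition or an induction over the fan, and even the retraction onto the core only gives generation of $H_*$ as a vector space by classes supported on the core, not generation by orbit-closure classes as an $H_\gamma$-module, so this step requires an actual argument. Second, and more seriously, the absence of further relations is precisely the content of the proposition, and each of your three suggested routes reduces it to a statement you do not establish: the graded dimension of the presented module (the ``bookkeeping of overlaps'' of the cyclic submodules $H_\gamma\cdot F_I$ is the whole combinatorial difficulty), or the perfectness of the combinatorial pairing (which is essentially equivalent to the proposition plus the description of $H_\gamma$), or the Mayer--Vietoris induction with the interiority condition tracked through the cut-and-paste (again the hard part, only named). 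Until one of these is carried out -- an induction on the number of maximal cones, or an identification of the presented module with the known dual of $H_\gamma$ -- the proposal establishes only a surjection-candidate, not the stated presentation.
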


\begin{definition}
	The orbifold cohomology $H$ of the smooth toric DM stack $\mathbb{P}_{\Sigma}$ is defined as the direct sum $\bigoplus_{\gamma} H_{\gamma}$ over all twisted sectors. Similarly, the orbifold cohomology with compact support $H^c$ is defined as $\bigoplus_{\gamma} H_{\gamma}^c$. We denote by $1_\gamma$ the generator of $H_\gamma$.
\end{definition}

\smallskip
There is a natural perfect pairing between $H$ and $H^c$ called \emph{Euler characteristic pairing}. Its origin is the eponymous pairing on certain Grothendick $K$-groups, which is then translated to the cohomology via the Chern character, see \cite{BHconj}. We will not be using the original definition, but rather the following formula for the Euler characteristic pairing, which is proved in \cite{BHW}.

\begin{proposition}\label{def-eulerpairing}
The Euler characteristic pairing $\chi:H\otimes H^c\rightarrow\C$ on the toric DM stack $\mathbb{P}_{\Sigma}$ is given by
	\begin{align*}
		\chi(a,b)=\chi(\oplus_{\gamma}a_{\gamma},\oplus_{\gamma}b_{\gamma})=\sum_{\gamma}\frac{1}{|\operatorname{Box}(\sigma(\gamma))|}\int_{\gamma^{\vee}}\operatorname{Td}(\gamma^{\vee})a_{\gamma}^*b_{\gamma^{\vee}}
	\end{align*}
	Here $*:H\rightarrow H$ is the duality map given by $(1_{\gamma})^*=1_{\gamma^{\vee}}$ and $(D_i)^*=-D_i$, and $\operatorname{Td}(\gamma)$ is the Todd class of the twisted sector $\gamma$ which is defined as
	\begin{align*}
		\operatorname{Td}(\gamma)=\frac{\prod_{i\in\operatorname{Star}\sigma(\gamma)\backslash\sigma(\gamma)}D_i}{\prod_{i\in\operatorname{Star}\sigma(\gamma)}(1-e^{-D_i})}.
	\end{align*}
	\smallskip
The linear function $\int\colon H_{\gamma}^c\rightarrow\C$ takes values $\frac{1}{\Vol_{\overline{I}}}$ on each generator $F_I$, where $\Vol_{\overline{I}}$ denotes the volume of the cone $\overline{\sigma_{I}}$ in the quotient fan $\Sigma/\sigma(\gamma)$.  It takes value zero on all elements 
of $H_{\gamma}^c$ of lower degree. 
\end{proposition}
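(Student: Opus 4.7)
The plan is to derive the formula from the Hirzebruch--Riemann--Roch theorem on the smooth toric Deligne--Mumford stack $\mathbb{P}_\Sigma$, in its orbifold (Kawasaki) form. The starting point is the original definition of the Euler characteristic pairing: for $E\in K^0(\mathbb{P}_\Sigma)$ and $F\in K^0_c(\mathbb{P}_\Sigma)$ one has $\chi(E,F)=\sum_i(-1)^i\dim\operatorname{Ext}^i(E,F)$, which by Riemann--Roch (well-defined because $F$ has compact support) translates into an integral of characteristic classes over the inertia stack $I\mathbb{P}_\Sigma$. For a toric DM stack the connected components of $I\mathbb{P}_\Sigma$ are indexed precisely by the box elements $\gamma\in\operatorname{Box}(\Sigma)$, and each such component is itself a smooth toric DM stack associated to the quotient fan $\Sigma/\sigma(\gamma)$. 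This accounts for the sum $\sum_\gamma$ in the formula.

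First I would work out the orbifold (Chen--Ruan) Chern character of a toric line bundle $L=\mathcal{O}(\sum_i m_i D_i)$. Its restriction to the sector $\gamma=\sum_i\gamma_i v_i$ picks up a phase factor $e^{2\pi\ii\sum_i\gamma_i m_i}$ from the stabilizer action, in addition to the usual $e^{\sum m_i D_i}$. Passing to the dual bundle $L^\vee$ negates all the $m_i$, which on the phase side is equivalent to replacing $\gamma$ by $-\gamma\equiv\gamma^\vee$ modulo $\sum_{i\in\sigma(\gamma)}\Z v_i$. This both explains the duality map $*$ given by $1_\gamma\mapsto 1_{\gamma^\vee}$ and $D_i\mapsto -D_i$, and forces the integration to take place over the sector $\gamma^\vee$ rather than over $\gamma$.

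Next I would apply Kawasaki--Riemann--Roch. The integral $\int_{\mathbb{P}_\Sigma}\operatorname{ch}(E^\vee)\operatorname{ch}(F)\operatorname{Td}(\mathbb{P}_\Sigma)$ is expressed as a sum over $\gamma$ weighted by $1/|G_\gamma|$, where $G_\gamma$ is the generic stabilizer of the component. For a toric DM stack this stabilizer is precisely the finite abelian group of lattice points of $\operatorname{span}_\R(\sigma(\gamma))\cap N$ modulo $\sum_{i\in\sigma(\gamma)}\Z v_i$, i.e.\ $\operatorname{Box}(\sigma(\gamma))$, giving the factor $1/|\operatorname{Box}(\sigma(\gamma))|$. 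The twisted Todd class $\operatorname{Td}(\gamma^\vee)$ in the proposition is then assembled from the tangent bundle of $\bar\gamma$ (contributing $D_i/(1-e^{-D_i})$ for $i\in\operatorname{Star}(\sigma(\gamma))\setminus\sigma(\gamma)$) and from the normal bundle decomposition by stabilizer eigencharacters (contributing $1/(1-e^{-D_i})$ for $i\in\sigma(\gamma)$, once the phase factors have been absorbed via the $*$ map); combining the two yields the stated formula for $\operatorname{Td}(\gamma^\vee)$.

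Finally I would verify the integration formula for $\int\colon H^c_\gamma\to\C$. The generator $F_I$ with $I\in\operatorname{Star}(\sigma(\gamma))$ and $\sigma_I^\circ\subseteq C^\circ$ corresponds, via the presentation of Proposition \ref{cohcmp}, to the fundamental class of a torus-invariant compact substack of $\bar\gamma$ cut out by $I$; its integral equals $1/\Vol_{\bar I}$ by the classical toric formula for intersection numbers applied to the quotient fan $\Sigma/\sigma(\gamma)$. The main obstacle I anticipate is the careful bookkeeping of the phase factors and of the Poincar\'e-dual pairing of the opposite sectors $\gamma$ and $\gamma^\vee$; once this matching is unambiguously set, the remaining verification reduces to standard Hirzebruch--Riemann--Roch applied fiberwise on each twisted sector.
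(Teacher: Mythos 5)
The paper does not prove this proposition: it is imported verbatim from \cite{BHW}, as stated in the sentence immediately preceding it (``which is proved in \cite{BHW}''). So your proposal is a reconstruction of an argument that the present paper treats as a black box. That said, the overall route you sketch --- Kawasaki--Riemann--Roch over the inertia stack, whose components are indexed by $\operatorname{Box}(\Sigma)$, each with generic stabilizer of order $|\operatorname{Box}(\sigma(\gamma))|$, with duality swapping $\gamma$ and $\gamma^\vee$ and sending $D_i\mapsto -D_i$ --- is the standard and almost certainly the intended one.

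The genuine gap is the one you yourself flag at the end and do not close: the treatment of the stabilizer eigenvalue phases. Kawasaki's theorem produces, from the normal bundle to the sector $\gamma$, a factor of the form $\prod_{i\in\sigma(\gamma)}\bigl(1-e^{-2\pi\ii\gamma_i}e^{-D_i}\bigr)^{-1}$, with nontrivial roots of unity $e^{2\pi\ii\gamma_i}$. The proposition's $\operatorname{Td}(\gamma^\vee)$ contains no such phases --- it is a rational function in $e^{-D_i}$ alone. Saying the phases are ``absorbed via the $*$ map'' is not an argument: the map $*$ only negates $D_i$ and permutes sectors, it does not touch scalar factors. The correct mechanism is that the orbifold Chern character $K^0(\mathbb{P}_\Sigma)_\CC\to H$ already builds the phases $e^{2\pi\ii\gamma_i m_i}$ into the identification of $K$-classes with elements of $H$, so when one rewrites Kawasaki--RR purely in terms of the cohomology classes $a_\gamma, b_{\gamma^\vee}$ the phases cancel against those hidden inside $a_\gamma^*$ and $b_{\gamma^\vee}$. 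Your proposal introduces the phases on the Chern-character side \emph{and} expects them to be removed from the Todd class, which double-counts and leaves the cancellation unexplained. Since this bookkeeping is exactly what makes the stated formula nontrivial (compare the phase-laden intermediate steps in Lemma~\ref{gamma class and todd class}), the proposal as written does not constitute a proof of the proposition. The statement about $\int F_I = 1/\Vol_{\overline I}$ is, by contrast, a routine consequence of the classical toric intersection formula on the quotient fan and is fine as sketched.
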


\smallskip
Let $\Sigma$ be a regular (=projective) subdivision of $C$ based on some of the $v_i$. Let $\psi_i$ be the real numbers such that 
$\Sigma$ reads off the lower boundary of the convex hull of the origin and $\{(v_i,\psi_i), 1\leq i\leq n\}$ in $N_\R\oplus \R$. We assume that $\psi_i$ are generic so this convex hull is simplicial.
We denote by $\psi$ the strictly convex piecewise linear function on $C$ whose graph is the aforementioned lower boundary.
It takes values $\psi_i$ on all $v_i$ which generate rays in $\Sigma$ and has lower values than $\psi_i$ on other $v_i$.
Its key property is that for any finite collection $w_i\in C$ and $\alpha_i\in \R_{>0}$ there holds
$$
\psi(\sum_i \alpha_i w_i) \leq \sum_i \alpha_i \psi(w_i)
$$
with equality if and only if there exists a cone in $\Sigma$ which contains all of the $w_i$. 

\smallskip
Recall from \cite{BHconj} the following solution to the equations $\mathrm{bbGKZ}(C,0)$ with values in $H=\bigoplus_\gamma H_\gamma$. We define
\begin{align}\label{Gamma}
\Gamma_c(x_1,\ldots,x_n) =\bigoplus_{\gamma} \sum_{l\in L_{c,\gamma}} \prod_{i=1}^n 
\frac {x_i^{l_i+\frac {D_i}{2\pi \ii }}}{\Gamma(1+l_i+\frac {D_i}{2\pi \ii })}
\end{align}
where the direct sum is taken over twisted sectors $\gamma = \sum_{j\in \sigma(\gamma)} \gamma_j v_j$ and the set $L_{c,\gamma}$ is the set of solutions to $\sum_{i=1}^n l_i v_i = -c$ with $l_i-\gamma_i \in \Z$ for all $i$. The numerator is defined by picking a branch of $\log(x_i)$.

\smallskip
We will first prove that for each $c\in C\cap N$ the series for $\Gamma$ converges absolutely and uniformly on compacts for ${\bf x}$ such that the $(-\log |x_i|)$ are in an appropriate shift of the cone of values on $v_i$ of convex $\Sigma$-piecewise linear functions. 
The proof was skipped in  \cite{BHconj} because it is essentially the same as that in  \cite{MellinBarnes}, but we will present it here, both for completeness and to facilitate arguments about the asymptotic behavior of $\Gamma_c$.

\begin{proposition}\label{convgamma}
We denote by $C_\Sigma$ the cone of the secondary fan that corresponds to $\Sigma$, i.e. the cone of $(\psi_i)\in \R^n$ that give rise to $\Sigma$.
For each $c\in C\cap N$ there exists $\hat \psi\in \R^n$ such that 
the series \eqref{Gamma} converges absolutely and uniformly on compacts in the region of $\C^n$
\begin{align}\label{region}
\{(-\log |x_1|,\ldots,-\log|x_n|)\in 
\hat\psi + C_\Sigma,~\arg({\bf x}) \in (-\pi ,\pi)^n\}.
\end{align}
\end{proposition}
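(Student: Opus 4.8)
The plan is to estimate the general term of \eqref{Gamma} using Stirling's asymptotics for the Gamma function and reduce convergence to a comparison with a geometric-type series whose exponents are controlled by the strict convexity property of $\psi$.

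\textbf{Step 1: reduce to a scalar estimate.} Since $H_\gamma$ is finite-dimensional and $D_i$ act nilpotently, each term $\prod_i x_i^{l_i+D_i/(2\pi\ii)}/\Gamma(1+l_i+D_i/(2\pi\ii))$ is a polynomial in the $D_i$ (and in $\log x_i$) with scalar coefficients, and it suffices to bound the absolute value of each such coefficient. Writing $s_i = l_i + D_i/(2\pi\ii)$ and noting that only finitely many distinct nilpotent operators appear, I would first bound $\bigl|\prod_i x_i^{s_i}\bigr|= \prod_i |x_i|^{l_i} e^{-\arg(x_i)\,\mathrm{Im}(D_i/(2\pi\ii))}$; the constraint $\arg(\mathbf{x})\in(-\pi,\pi)^n$ keeps the second factor bounded on compacts. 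So the essential point is to bound $\prod_i 1/|\Gamma(1+l_i+D_i/(2\pi\ii))|$ and show that, after multiplying by $\prod_i |x_i|^{l_i}$, the sum over $l\in L_{c,\gamma}$ converges.

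\textbf{Step 2: Stirling estimate for the Gamma factors.} Using $|\Gamma(1+s)|\sim \sqrt{2\pi}\,|s|^{\mathrm{Re}(s)+1/2}e^{-\mathrm{Re}(s)}$ type bounds (valid away from the poles, which are avoided since $l_i - \gamma_i \in \Z$ and the $\gamma_i \in [0,1)$ keep us off the negative integers), I get, up to subexponential factors, $1/|\Gamma(1+l_i+D_i/(2\pi\ii))| \lesssim e^{l_i} l_i^{-l_i}$ for $l_i \to +\infty$, while for $l_i$ bounded below (the $L_{c,\gamma}$ lattice is only semi-infinite in the relevant directions) the factor is bounded. Collecting, the general term is bounded, up to polynomially growing factors in $\|l\|$, by
\begin{align*}
\prod_{i=1}^n |x_i|^{l_i}\, \frac{e^{l_i}}{\max(l_i,1)^{l_i}} = \exp\Bigl(\sum_i l_i\bigl(\log|x_i| + 1 - \log\max(l_i,1)\bigr)\Bigr).
\end{align*}

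\textbf{Step 3: use strict convexity of $\psi$ to force summability.} Parametrize $L_{c,\gamma}$ by $l = l^{(0)} + \sum_{\text{relations}} t_\rho r_\rho$ where $\{r_\rho\}$ is a basis of the lattice of relations $\sum_i (r_\rho)_i v_i = 0$; the cone of $(\psi_i)$ defining $\Sigma$ is exactly the set where $\sum_i \psi_i (r_\rho)_i$ pairs correctly, and strict convexity of $\psi$ says $\sum_i \psi_i l_i$ is, up to the affine term fixed by $\sum l_i v_i = -c$, a strictly convex (indeed, going to $+\infty$) function of the free parameters, minimized on the finitely many $l$ supported on a single cone of $\Sigma$. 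Choosing $\hat\psi$ so that $-\log|x_i|$ lies deep enough inside $\hat\psi + C_\Sigma$, the exponent $\sum_i l_i \log|x_i| = -\sum_i l_i(-\log|x_i|)$ is dominated by $-\sum_i l_i \psi_i$ (minus a large positive multiple of a strictly convex, coercive function of the free parameters), which beats the $+\sum_i l_i(1-\log\max(l_i,1))$ correction (this correction grows like $-\|l\|\log\|l\|$ in the unbounded directions, i.e. it only helps, and is bounded in the finitely many bounded directions). Hence the series is dominated by a convergent sum, uniformly on compact subsets of the region \eqref{region}; the standard Weierstrass $M$-test then gives absolute and uniform convergence, and the sum is holomorphic there.

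\textbf{Main obstacle.} The delicate point is Step 3: matching the lattice geometry of $L_{c,\gamma}$ with the secondary-fan cone $C_\Sigma$ and verifying that the quadratic/entropy term $-\sum l_i\log\max(l_i,1)$ genuinely dominates in all unbounded directions of the relation lattice while the linear term controlled by $\psi$ handles the ``boundary'' directions — i.e. bookkeeping which directions of $L_{c,\gamma}$ are semi-infinite (where $l_i\to+\infty$) versus two-sided, and checking the poles of $\Gamma$ are avoided in the latter. This is exactly where the genericity of the $\psi_i$ and the strict convexity inequality for $\psi$ are used, and it parallels the argument in \cite{MellinBarnes}; I expect the rest to be routine Stirling bookkeeping.
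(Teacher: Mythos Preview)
Your proposal has two genuine gaps.

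First, the claim that ``the $L_{c,\gamma}$ lattice is only semi-infinite in the relevant directions'' and that ``poles of $\Gamma$ are avoided'' is false. The set $L_{c,\gamma}$ is a full affine sublattice of rank $n-\rk N$, and each coordinate $l_i$ is unbounded below along some direction. For $i\notin\sigma(\gamma)$ one has $\gamma_i=0$, so $l_i\in\Z$ and can be negative; then $1/\Gamma(1+l_i+\frac{D_i}{2\pi\ii})$ is divisible by $D_i$, and the whole term is nonzero only when the set $\{i:l_i\in\Z_{<0}\}\sqcup\sigma(\gamma)$ is a cone in $\Sigma$. This Stanley--Reisner vanishing is the first key input you are missing: it lets the paper split the sum into finitely many pieces $L_{c,\gamma,\sigma}$ indexed by maximal cones $\sigma\supseteq\sigma(\gamma)$, on each of which the indices with $l_i<0$ lie in $\sigma$. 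Only then does the convexity inequality $\psi\bigl(\sum_{l_i<0}(-l_i)v_i\bigr)\le\sum_{l_i\ge0}l_i\psi(v_i)+\psi(c)$ apply, because convexity of $\psi$ gives an equality on the left only when the arguments lie in a single cone of $\Sigma$. Without this restriction your linear form $\sum_i l_i\psi_i$ is not coercive on the relation lattice (it is linear, hence unbounded below), so Step~3 as written cannot conclude.

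Second, your Stirling estimate points the wrong way. For $l_i\to-\infty$ with $l_i\notin\Z$ (which occurs for $i\in\sigma(\gamma)$), or after extracting $D_i$ when $l_i\in\Z_{<0}$, the reflection formula gives $|1/\Gamma(1+l_i)|\sim|\sin(\pi l_i)|\,\Gamma(-l_i)/\pi$, which \emph{grows} like $|l_i|!$. So the ``entropy correction'' $-\sum l_i\log\max(l_i,1)$ does not ``only help'': in the directions where some $l_i\to-\infty$ it is absent and the Gamma factor explodes. What actually saves the estimate is the degree constraint $\sum_i l_i=-\deg c$: the factorial growth from negative $l_i$ is cancelled by the factorial decay from the compensating positive $l_j$, leaving only exponential growth in $\sum_i|l_i|$. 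This is precisely \cite[Lemma~A.4]{MellinBarnes}, and the paper then absorbs that exponential by rescaling $\hat\psi$, using strict convexity to get $\sum_i|l_i|\le r\sum_i l_i\hat\psi_i$ on each cone $C_\sigma$. A term-by-term Stirling bound cannot see this cancellation, so the argument does not close without the joint estimate.
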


\begin{proof}
An immediate observation is that we can ignore the factor
$$\prod_{i=1}^n x_i^{\frac {D_i}{2\pi \ii}} = \prod_{i=1}^n \ee^{\frac {D_i \log x_i}{2\pi \ii} }$$
because it does not depend on $l$ and is bounded on compacts in the region \eqref{region}.

\smallskip
It suffices to understand what happens for a fixed $\gamma$. Note that while the summation takes place over an affine lattice $L_{c,\gamma}$, the nonzero contributions only occur for $(l_1,\ldots,l_n)$ such that the set
$$I(l)=\{i, l_i\in \Z_{<0}\}\sqcup \sigma(\gamma)$$
is a cone $\sigma$ in $\Sigma$, because each $l_i\in \Z_{<0}$ contributes a factor $D_i$ due to a pole of $\Gamma$ at a nonpositive integer. Consequently, it suffices to bound the summation over the subset $L_{c,\gamma,\sigma}$  of $L_{c,\gamma}$ with the additional property that the above defined $I(l)$ is a subset of  some fixed maximum-dimensional cone $\sigma$ of $\Sigma$ that contains $\sigma(\gamma)$.
For any such $l\in L_{c,\gamma,\sigma}$ we have 
$$
\sum_{i,l_i<0} (-l_i) v_i= \sum_{i, l_i\geq 0} l_i v_i + c.
$$
Let us denote by $\psi$ the $\Sigma$-piecewise linear convex function that corresponds to $(-\hat \psi_i - \log |x_i|)$ by the assumption on ${\bf x}$.  Since the $v_i$ on the left hand side of the above equation lie in $\sigma\in \Sigma$, we have   
\begin{align*}
\sum_{i,l_i<0}(-l_i) (-\hat \psi_i - \log |x_i|) &= \psi(\sum_{i,l_i<0} (-l_i) v_i) \leq \sum_{i, l_i\geq 0}
 l_i \psi(v_i) + \psi(c)\\
 &=\sum_{i,l_i\geq 0}l_i ( -\hat \psi_i - \log |x_i|) +\psi(c)
\end{align*}
and therefore
\begin{align}\label{convexbound}
\sum_{i=1}^n l_i \log|x_i| \leq -\sum_{i=1}^n l_i \hat\psi_i + \psi(c).
\end{align}
This leads to an upper bound
\begin{align}\label{bound}
\Big\vert
\prod_{i=1}x_i^{l_i} \Big\vert \leq \ee^{\psi(c)} \ee^{- \sum_{i=1}^n l_i \hat\psi_i }.
\end{align}

\smallskip
Crucially, since 
all $v_i$ have degree $1$, we see that $\sum_i l_i = -\deg c$. Thus, we can apply the key estimate of \cite[Lemma A.4]{MellinBarnes} which states that for any $\delta>0$ and any collection of real numbers $a_i, b_i$ for $i=1,\ldots,n$ with 
$$
|\sum_{i}a_i |\leq \delta,~~\sum_i |b_i| \leq\delta
$$
there exists a constant $A$ such that 
$$
\Big\vert \prod_{i=1}^n \frac 1{\Gamma(a_i+{\rm i} b_i)}\Big\vert \leq A (4n)^{\sum_{i=1}^n |a_i|}.
$$
By the Cauchy's formula for partial derivatives, this implies an upper bound of the form $A_1 (A_2)^{\sum_{i=1}^n |l_i|}$ on the coefficients on all monomials in $D_i$ of bounded degree of the function
$$ \prod_{i=1}^n \frac 1{\Gamma(1 + l_i +\frac {D_i}{2\pi{\rm i}})}.
$$
Together with \eqref{bound}, we conclude that in any Euclidean norm on $H_\gamma$ the absolute value of each term of the series is bounded by
\begin{align}\label{A3A2bound}
\Big\vert\prod_{i=1}^n 
\frac {x_i^{l_i}}{\Gamma(1+l_i+\frac {D_i}{2\pi \ii })}\Big\vert
\leq  A_1 (A_2)^{\sum_{i=1}^n |l_i|}\Big\vert\prod_{i=1}x_i^{l_i} \Big\vert 
%\\
\leq
  A_3 (A_2)^{\sum_{i=1}^n |l_i|} 
\ee^{
- \sum_{i=1}^n l_i \hat\psi_i 
}.
\end{align}

\smallskip
We observe that the set $L_{c,\gamma,\sigma}$ is the set of lattice points in a shift of a (lower-dimensional) polyhedral cone $C_{\sigma}$ in 
$\R^n$ given by the equality $\sum_{i=1}^n l_i v_i = 0$ and inequalities $l_i\geq 0$ for all $i\not\in\sigma$. We may assume $\hat\psi$ to give a strictly $\Sigma$-convex function. It then follows that for any ray generator $l$ of $C_{\sigma}$ there holds 
$$
\sum_i l_i \hat\psi_i  > 0.
$$
Indeed, by convexity for $\hat\psi$ for any $l\in C_{\sigma}$ we have the inequality $\sum_i l_i \hat\psi_i  \geq 0$ (the proof is the same as that of \eqref{convexbound}) which holds even if $\hat\psi$ is deformed slightly, so it can only be equality for $l=0$.
As a consequence, there is a constant $r$ such that 
$$
\sum_{i=1}^n |l_i| \leq r (\sum_i l_i \hat\psi_i )
$$
on $C_{\sigma}$. 

\smallskip
Therefore, we can replace $\hat\psi$ by a large enough multiple of itself and use \eqref{A3A2bound} to get on any compact subset of the region \eqref{region}
\begin{align*}
\Big\vert\prod_{i=1}^n 
\frac {x_i^{l_i}}{\Gamma(1+l_i+\frac {D_i}{2\pi \ii })}\Big\vert
\leq
  A_4 
\ee^{-A_5 \sum_{i=1}^n l_i \hat\psi_i}
\end{align*}
for some $A_5>0$. Since the number of terms in $L_{c,\gamma,\sigma}$ with 
 $\sum_{i=1}^n l_i \hat\psi_i \in [m,m+1)$ is bounded by a polynomial in $m$, we get the desired convergence.
\end{proof}

\smallskip
There is a similarly defined  $\Gamma$-series solution $\Gamma^\circ$ of $\mathrm{bbGKZ}(C^\circ,0)$, with values in $H^c=\bigoplus_\gamma H_\gamma^c$. We define
\begin{align*}
\Gamma_c^{\circ}(x_1,\ldots,x_n) =\bigoplus_{\gamma} \sum_{l\in L_{c,\gamma}} \prod_{i=1}^n 
\frac {x_i^{l_i+\frac {D_i}{2\pi \ii }}}{\Gamma(1+l_i+\frac {D_i}{2\pi \ii })}\left(\prod_{i\in\sigma}D_i^{-1}\right)F_{\sigma}
\end{align*}
where $\sigma$ is the set of $i$ with $l_i\in\Z_{<0}$.

\begin{proposition}
The series $\Gamma^\circ$ converges uniformly on compacts in the region \eqref{region} for an appropriate choice of $\hat\psi$.
\end{proposition}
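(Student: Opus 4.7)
The plan is to mimic the proof of Proposition \ref{convgamma} almost verbatim. The only genuinely new ingredient is the extra factor $\big(\prod_{i\in\sigma}D_i^{-1}\big)F_\sigma$ taking values in $H_\gamma^c$, where $\sigma=\{i:l_i\in\Z_{<0}\}$. I would first observe that $1/\Gamma(1+l_i+z)$ has a simple zero at $z=0$ whenever $l_i\in\Z_{<0}$, so the symbolic $D_i^{-1}$ is precisely cancelled by a matching factor of $D_i$ coming from the $\Gamma$-denominator, leaving an entire function of $D_i$. This already confirms that each term is a well-defined element of $H_\gamma^c$.

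Next I would identify which $l\in L_{c,\gamma}$ actually contribute. As in Proposition \ref{convgamma}, a Stanley-Reisner argument forces the nonzero terms to have $\sigma\sqcup\sigma(\gamma)$ a cone of $\Sigma$; moreover, for $F_\sigma$ to be a genuine generator of $H_\gamma^c$ (see Proposition \ref{cohcmp}), one also needs $\sigma\in\operatorname{Star}(\sigma(\gamma))$ with the corresponding open cone inside $C^\circ$. Hence the summation reduces, exactly as before, to $L_{c,\gamma,\sigma'}$ for each maximum-dimensional cone $\sigma'$ of $\Sigma$ containing $\sigma(\gamma)$.

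The two estimates from the previous proof then carry over. A Cauchy-type bound combined with \cite[Lemma A.4]{MellinBarnes} applied to the regularized entire function yields, in any Euclidean norm on $H_\gamma^c$, an inequality of the form
\begin{align*}
\Big\Vert\Big(\prod_{i\in\sigma}D_i^{-1}\Big)\prod_{i=1}^n\frac{1}{\Gamma\!\left(1+l_i+\tfrac{D_i}{2\pi\ii}\right)}F_\sigma\Big\Vert\leq A_1(A_2)^{\sum_i|l_i|},
\end{align*}
since the $D_i^{-1}$ cancellation only alters constants, not the exponential growth rate. Combined with the geometric bound \eqref{bound} on $|\prod_i x_i^{l_i}|$ coming from the convexity of the piecewise linear function $\psi$, and the strict $\Sigma$-convexity of $\hat\psi$ which forces $\sum_i l_i\hat\psi_i$ to grow linearly along each ray of $C_{\sigma'}$, one rescales $\hat\psi$ to absorb the factor $(A_2)^{\sum_i|l_i|}$ and concludes by a polynomial-times-geometric series estimate, uniformly on compacts in the region \eqref{region}.

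I do not expect any serious obstacle: the combinatorics of the contributing terms and the geometric bound on $|\prod_i x_i^{l_i}|$ are inherited directly from Proposition \ref{convgamma}. The only point requiring genuine care is tracking the $D_i^{-1}$ cancellation to confirm that the resulting operator on $H_\gamma^c$ satisfies the same uniform bound as in the $\Gamma$-case, and this is an elementary consequence of the order of the zeros of $1/\Gamma$ at negative integers.
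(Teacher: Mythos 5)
Your proposal follows the same strategy the paper indicates (mimic the proof of Proposition \ref{convgamma}, which is all the paper says), and your tracking of the $D_i^{-1}$ cancellation against the simple zeros of $1/\Gamma(1+l_i+z)$ at $l_i\in\Z_{<0}$ is precisely the point that needs care. This is correct and matches the intended argument.
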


\begin{proof}
The idea of the proof are the same as that of Proposition \ref{convgamma} and we leave the details to the reader.
\end{proof}

\smallskip
Our next goal is to understand the asymptotic behavior of 
$$\Gamma_c(t^{-\psi(v_1)} x_1, \ldots, t^{-\psi(v_n)}x_n)$$
for real $t\to +\infty$. We can assume $x_i$ to be generic nonzero complex numbers, so that for large enough $t$ we fall within the range of convergence of $\Gamma$. 

\smallskip
For each $c$ we consider the minimum cone $\sigma(c)$ of $\Sigma$  that contains $c$. We have $c=\sum_{j\in \sigma(c)} c_j v_j$. It defines a twisted sector $\gamma(c) = \sum_{j\in \sigma(c)} \{c_j\}v_j$.  We also consider the dual twisted sector 
$\gamma^\vee(c)=\sum_{j\in \sigma(c), c_j\not\in \Z}(1-\{c_j\})v_j$. There is a special element 
\begin{align}\label{leading}
-c = \sum_{i\in I}(-c_i) v_i 
\end{align}
in $L_{c,\gamma^\vee(c)}$.

\begin{lemma}\label{lead-c}
As $t\to +\infty$, we have for $c\in C\cap N$ and $\gamma\neq  \gamma^\vee(c)$ the $\gamma$ summand of 
$\Gamma_c(t^{-\psi(v_1)} x_1, \ldots, t^{-\psi(v_n)}x_n)$ is $o(t^{\psi(c)})$. For $\gamma=\gamma^\vee(c)$ we have
\begin{align*}
\Gamma_c(t^{-\psi(v_1)} x_1, \ldots)
=
t^{\psi(c)} \prod_{i=1}^n {\rm e}^{\frac {D_i}{2\pi{\rm i}}( \log x_i - \psi(v_i) \log t)}
\prod_{i=1}^{n}\frac {x_i^{-c_i}}{\Gamma(1-c_i +\frac {D_i}{2\pi{\rm i}} )}
(1+o(1)).
\end{align*}
\end{lemma}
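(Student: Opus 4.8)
The plan is to single out one distinguished index in the double sum \eqref{Gamma} and to show that after the substitution $x_i\mapsto t^{-\psi(v_i)}x_i$ every other index contributes a term of strictly smaller order in $t$. First I would observe that the substitution replaces $\prod_i x_i^{D_i/2\pi\ii}$ by $\prod_i\ee^{\frac{D_i}{2\pi\ii}(\log x_i-\psi(v_i)\log t)}$, which is independent of the summation index, $H$-valued, and of size $O\big((\log t)^{\dim H}\big)$ in any fixed norm; after pulling this factor out in front, the remaining series reads $\sum_\gamma\sum_{l\in L_{c,\gamma}}t^{-\sum_i l_i\psi(v_i)}\prod_i\frac{x_i^{l_i}}{\Gamma(1+l_i+D_i/2\pi\ii)}$. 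Hence the order in $t$ of the term indexed by $l$ is $\psi(c)-E(l)$ with $E(l):=\psi(c)+\sum_i l_i\psi(v_i)$, and the distinguished index \eqref{leading} (for $\gamma=\gamma^\vee(c)$) is immediately checked to have $E(l)=0$ and, upon restoring the factor pulled out, to reproduce exactly the main term $t^{\psi(c)}\prod_i\ee^{\frac{D_i}{2\pi\ii}(\log x_i-\psi(v_i)\log t)}\prod_i\frac{x_i^{-c_i}}{\Gamma(1-c_i+D_i/2\pi\ii)}$ of the statement.

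The combinatorial heart of the proof is that $E(l)\ge 0$ for every $l$ contributing a nonzero term, with equality only for the distinguished index and $\gamma=\gamma^\vee(c)$. The inequality is obtained exactly as in the proof of Proposition \ref{convgamma}: a nonzero term forces $I(l)=\{i:l_i\in\Z_{<0}\}\sqcup\sigma(\gamma)$ to be a cone $\sigma\in\Sigma$, so $\sum_{l_i<0}(-l_i)v_i$ lies in $\sigma$, where $\psi$ is linear; applying $\psi$ to the identity $\sum_{l_i<0}(-l_i)v_i=\sum_{l_i\ge 0}l_i v_i+c$ and invoking the convexity of $\psi$ yields $\sum_{l_i<0}(-l_i)\psi(v_i)\le\sum_{l_i\ge 0}l_i\psi(v_i)+\psi(c)$, i.e.\ $E(l)\ge 0$. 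For the equality case, equality in the convexity estimate means, by the equality clause of the convexity property of $\psi$, that some cone $\tau\in\Sigma$ contains $c$ together with all $v_i$ having $l_i>0$; since $\Sigma$ is a fan, $\rho:=\sigma\cap\tau$ is a common face, and the common value $p=\sum_{l_i<0}(-l_i)v_i=\sum_{l_i>0}l_i v_i+c$ lies in $\rho$. Because a positive combination of vectors of a cone that lands in a face of that cone must have all its summands in that face, $c$ and all $v_i$ with $l_i>0$ lie in $\rho\subseteq\sigma$, so $\sigma(c)\subseteq\sigma$; combining this with $\sum_i l_i=-\deg c$ and the linear independence of $\{v_i:i\in\sigma\}$, and expanding the remaining $v_i$ over this basis, one forces $l_i=-c_i$ on $\sigma(c)$ and $l_i=0$ elsewhere, after which reading off residues modulo $\Z$ identifies $\gamma$ with $\gamma^\vee(c)$. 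I expect this last step to be the main obstacle, above all because one must keep careful track of those $v_i$ that are not ray generators of $\Sigma$, using $\deg v_i=1$ when expanding them over $\{v_i:i\in\sigma\}$.

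Granting the inequality and its equality case, it remains to show the remainder is of lower order. On each lattice coset $L_{c,\gamma,\sigma}$ the function $E$ is proper — here I would use that $\psi$ is \emph{strictly} $\Sigma$-convex, so $\sum_i w_i\psi(v_i)>0$ for every nonzero $w$ in the pointed cone $C_\sigma$, by the argument at the end of the proof of Proposition \ref{convgamma} — and it vanishes only at the distinguished index; since there are finitely many pairs $(\gamma,\sigma)$, there is a uniform $\delta>0$ with $E(l)\ge\delta$ for every other contributing index. Since for $t$ large the point $(t^{-\psi(v_i)}x_i)_i$ lies in the convergence region \eqref{region}, I can write $l=\tilde l+w$ with $w\in C_\sigma$ and use $\sum_i|w_i|\le r\sum_i w_i\psi(v_i)$ together with the bound on $\prod_i 1/\Gamma(1+l_i+D_i/2\pi\ii)$ coming from $\sum_i l_i=-\deg c$ and \cite[Lemma A.4]{MellinBarnes}, exactly as in Proposition \ref{convgamma}, to conclude that the sum of all non-distinguished terms is $O\big(t^{\psi(c)-\delta}(\log t)^{O(1)}\big)$ for $t$ large; the same estimate applied to a single $\gamma\ne\gamma^\vee(c)$ (where no distinguished index occurs, so $E\ge\delta_\gamma>0$ throughout) gives the first assertion of the lemma, and comparing with the main term, whose size is $\asymp t^{\psi(c)}(\log t)^{k}$, yields the $(1+o(1))$ form of the second.
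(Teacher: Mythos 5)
Your outline follows the same route as the paper's proof: pull the factor $\prod_i x_i^{D_i/2\pi\ii}$ out front, use convexity of $\psi$ to show every contributing index satisfies $E(l)\ge 0$ (this is exactly the derivation of \eqref{inequality}), identify the equality case with the distinguished index \eqref{leading}, and then control the tail uniformly, either by the estimates of Proposition \ref{convgamma} or by absolute convergence plus a positive gap $\delta$. The inequality, the properness/gap argument on each $L_{c,\gamma,\sigma}$, and the $(1+o(1))$ bookkeeping are all fine and agree with the paper.

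The gap is precisely at the step you defer with ``one forces $l_i=-c_i$''. From what you have established --- $c$ and all $v_i$ with $l_i\neq 0$ lie in a common cone $\rho\in\Sigma$, $\sum_i l_i=-\deg c$, and the rays of $\rho$ are linearly independent --- the conclusion $l=-c$ does \emph{not} follow, because the $v_i$ with $l_i>0$ need not be rays of $\rho$: the triangulation is only required to use some of the points $v_i$, so a point $v_j$ may sit in the relative interior of a cone of $\Sigma$. Concretely, if $v_1,v_2$ span a cone of $\Sigma$ and $v_3=\tfrac12(v_1+v_2)$ is among the chosen lattice points (degree $1$, not a vertex of $\Sigma$), then for $c=0$, $\gamma=0$ the vector $l=(-1,-1,2,0,\ldots,0)$ lies in $L_{0,0}$, has negative support on a cone of $\Sigma$, satisfies $\sum_i l_i=0=-\deg c$ and $E(l)=0$ exactly (since $\psi$ is linear on that cone), yet $l\neq -c$; its contribution is a nonzero multiple of $D_1D_2$ whenever that product survives in $H_0$ --- e.g.\ for $C$ the cone over the triangle with vertices $(2,0),(0,2),(-2,-2)$ at height one, $\Sigma$ the star subdivision at the origin, where $H_0\cong\C[h]/(h^3)$ and $D_1D_2=h^2\neq 0$. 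So the $\deg v_i=1$ bookkeeping you propose cannot by itself force $l_i=-c_i$; to close the argument you must either know that every $v_i$ occurring with $l_i\neq 0$ spans a ray of $\Sigma$ (e.g.\ because the triangulation involves all the $v_i$; then a ray contained in $\rho$ is a face of $\rho$ and your linear-independence argument does finish), or give a separate argument disposing of contributions supported on non-ray points, say by showing the relevant $D$-products vanish. Be aware that the paper's own proof dispatches this point in one sentence (``This means that $l_i=-c_i$''), so the step you flagged as the main obstacle is genuinely the crux, and your proposal as written does not resolve it.
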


\begin{proof}
Let $\gamma = \sum_{j\in \sigma(\gamma) }\gamma_j v_j$.
Let $(l_i)$ be an element of $L_{c,\gamma}$. The contribution to $\Gamma_c(t^{-\psi(v_1)} x_1, \ldots)$ is only nonzero if the set of
$i$ for which $l_i\in \Z_{<0}$ together with $\sigma(\gamma)$ is a cone in $\Sigma$. Consequently, $i$ for which $l_i$ are negative lie in a cone of $\Sigma$.  
Therefore,
\begin{align}\label{inpsi}
 \sum_{l_i<0}(-l_i) \psi(v_i) = \psi(\sum_{l_i<0}(-l_i) v_i) = \psi(c+\sum_{l_i>0}l_i v_i) \leq   \sum_{l_i>0} l_i\psi(v_i) + \psi(c),
\end{align}
which implies
\begin{align}\label{inequality}
	-\sum_{i=1}^n l_i \psi(v_i) \leq \psi(c).
\end{align}
Now notice that the equality in \eqref{inequality} holds if and only if the minimal cone of $\sum_{l_i<0}(-l_i) v_i$ is a cone in $\Sigma$ which contains $c$ and all $v_i$ with $l_i> 0$. This cone would then contain $c$ and all $v_i$ for which $l_i\neq 0$. This means that $l_i = -c_i$, which implies that $\gamma=\gamma^\vee(c)$. This gives the claimed asymptotic contribution.

\medskip
It is not enough to bound the asymptotic behavior of each individual term as $t\to\infty$, one also needs to ensure that the rest of the terms \emph{together} do not contribute to anything larger than $o(t^{\psi(c)})$. This follows either from the estimates of Proposition \ref{convgamma} or simply from the fact that we have absolute convergence at $\bf x$ and then all other terms decay faster.
Indeed, if we have an absolutely convergent series $\sum_{i\geq 0} a_i$ and then consider $\sum_{i\geq 0} a_i t^{\alpha_i}$ with $\alpha_0-\alpha_i$ larger than some positive $\varepsilon$, then as $t\to\infty$ we have
$$
\sum_{i\geq 0} a_i t^{\alpha_i} =a_0 t^{\alpha_0}(1+o(1))
$$
because
$$
\Big\vert \sum_{i>0} a_i t^{\alpha_i-\alpha_0} \Big\vert  \leq t^{-\varepsilon} \sum_{i>0} |a_i |.
$$
We can apply it to our situation  since $(l_i)$ are in a countable set and there exists $\varepsilon>0$ so that for all other terms the inequality  \eqref{inequality} is strict by at least $\varepsilon$. 
The logarithmic terms $\prod_i( t^{-\psi(v_1)} x_i)^{\frac{ D_i}{2\pi \ii}}$ can be absorbed by a slight change of $\varepsilon$.
\end{proof}

We can state a similar result for $\Gamma^\circ$. For $d\in C^\circ$ we consider the element of $L_{d,\gamma^\vee(d)}$
$$
-d=\sum_{i\in \sigma(d)} (-d_i) v_i.
$$
\begin{lemma}\label{lead-d}
As $t\to +\infty$, we have for $c\in C\cap N$ and $\gamma\neq  \gamma^\vee(d)$ the $\gamma$ summand of 
$\Gamma^\circ_d(t^{-\psi(v_1)} x_1, \ldots, t^{-\psi(v_n)}x_n)$ is $o(t^{\psi(d)})$. For $\gamma=\gamma^\vee(d)$ we have
\begin{align*}
\Gamma_d(t^{-\psi(v_1)} x_1, \ldots)
=
t^{\psi(d)} \prod_{i=1}^n {\rm e}^{\frac {D_i}{2\pi{\rm i}}( \log x_i - \psi(v_i) \log t)}
\prod_{i=1}^{n}\frac {x_i^{-d_i}}{\Gamma(1-d_i +\frac {D_i}{2\pi{\rm i}} )}\\
\left(\prod_{i\in\sigma(d)}D_i^{-1}\right)F_{\sigma(d)}
(1+o(1)).
\end{align*}
\end{lemma}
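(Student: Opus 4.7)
The plan is to mirror the proof of Lemma \ref{lead-c}. I would fix a twisted sector $\gamma$ and examine each term in the $\gamma$-summand of $\Gamma^\circ_d(t^{-\psi(v_1)}x_1,\ldots)$ indexed by $l\in L_{d,\gamma}$. Substituting the rescaled variables splits such a term into an overall scalar $t^{-\sum_i l_i\psi(v_i)}$, a matrix factor $\prod_i\ee^{D_i(\log x_i-\psi(v_i)\log t)/(2\pi\ii)}$ of at most polynomial growth in $\log t$, and the cohomology class $(\prod_{i\in\sigma}D_i^{-1})F_\sigma$ with $\sigma=\{i:l_i\in\Z_{<0}\}$. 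Nonvanishing of this class in $H^c_\gamma$ forces $\sigma\cup\sigma(\gamma)$ to be a cone of $\Sigma$ with interior in $C^\circ$, which is the $\Gamma^\circ$ analogue of the cone condition used in Lemma \ref{lead-c}.

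Next I would run the convexity bound. Because $\{v_i:l_i<0\}$ lies in the cone $\sigma\cup\sigma(\gamma)\in\Sigma$, the piecewise linear function $\psi$ is linear there, giving $\sum_{l_i<0}(-l_i)\psi(v_i)=\psi(\sum_{l_i<0}(-l_i)v_i)$. Using the identity $\sum_{l_i<0}(-l_i)v_i=d+\sum_{l_i>0}l_iv_i$ together with convexity of $\psi$, one obtains $-\sum_i l_i\psi(v_i)\le\psi(d)$, exactly as in \eqref{inequality}. Equality forces the minimal cone of $\sum_{l_i<0}(-l_i)v_i$ to contain $d$ and each $v_i$ with $l_i>0$; the uniqueness of the expansion of $d$ in this simplicial cone then pins $l=-d$, and the congruence $l_i-\gamma_i\in\Z$ pins $\gamma=\gamma^\vee(d)$.

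To conclude that the remaining admissible $(\gamma,l)$ contribute only $o(t^{\psi(d)})$, I would either invoke the absolute convergence of the $\Gamma^\circ$ series (obtained by a verbatim adaptation of the estimates in Proposition \ref{convgamma}), or argue directly — as in the last paragraph of the proof of Lemma \ref{lead-c} — that the strict inequality $-\sum_i l_i\psi(v_i)<\psi(d)$ holds uniformly by some $\varepsilon>0$ over all non-leading $(\gamma,l)$, with the polynomial growth of the matrix factor absorbed by shrinking $\varepsilon$ slightly. Substituting $l=-d$ and $\gamma=\gamma^\vee(d)$ into the defining series produces the leading term.

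The main obstacle I anticipate is reconciling this leading term with the compact closed form stated in the lemma. The direct contribution from $l=-d$ involves $(\prod_{i\in\sigma}D_i^{-1})F_\sigma$ with $\sigma=\{i\in\sigma(d):d_i\in\Z_{>0}\}$, whereas the lemma writes $(\prod_{i\in\sigma(d)}D_i^{-1})F_{\sigma(d)}$. Matching the two requires the $H^c_{\gamma^\vee(d)}$-relations $D_iF_I=F_{I\cup\{i\}}$ to absorb the extra $D_i$-s indexed by $\sigma(d)\setminus\sigma=\sigma(\gamma^\vee(d))$, interpreted consistently with the simple zero of $1/\Gamma(1-d_i+D_i/(2\pi\ii))$ at $D_i=0$ for $d_i\in\Z_{>0}$ and its regularity for $d_i\notin\Z$; carrying out this cohomological bookkeeping cleanly is the principal technical step.
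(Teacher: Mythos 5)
Your proposal mirrors the paper's intended argument (the paper's own proof of Lemma \ref{lead-d} is the single sentence ``analogous to Lemma \ref{lead-c} and left to the reader''), and you carry it out correctly: the cone condition forcing $\{i:l_i\in\Z_{<0}\}\cup\sigma(\gamma)$ to lie in $\Sigma$ (with interior in $C^\circ$), the convexity chain giving $-\sum_i l_i\psi(v_i)\le\psi(d)$ via $\sum_{l_i<0}(-l_i)v_i=d+\sum_{l_i>0}l_iv_i$, the identification $l=-d$ and $\gamma=\gamma^\vee(d)$ as the unique equality case, and the tail bound by absolute convergence or a uniform strictness $\varepsilon$. All of this is exactly what the authors expect the reader to reconstruct.

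Your ``main obstacle'' is in fact a sharp observation that the paper itself never addresses, since it skips the proof. When $l=-d$, the index set carrying $D_i^{-1}F_\bullet$ in the definition of $\Gamma^\circ_d$ is $\sigma=\{i:l_i\in\Z_{<0}\}=\{i\in\sigma(d):d_i\in\Z\}$, and $\sigma(d)\setminus\sigma=\sigma(\gamma^\vee(d))$. The relations $D_iF_I=F_{I\cup\{i\}}$ of Proposition \ref{cohcmp} are stated only for $I\in\operatorname{Star}(\sigma(\gamma))$, and $\sigma$ itself is disjoint from $\sigma(\gamma^\vee(d))$, so they do not apply verbatim to absorb the missing $D_i$'s; one must either read $F_\sigma$ in the definition of $\Gamma^\circ_d$ as $F_{\sigma\cup\sigma(\gamma)}$ (in which case the leading term is $(\prod_{i\in\sigma}D_i^{-1})F_{\sigma(d)}$, not literally the expression with $\prod_{i\in\sigma(d)}D_i^{-1}$), or accept that the displayed closed form is formal and its evaluation is really carried out by the identity $1/\Gamma(D_i/(2\pi\ii))=(D_i/(2\pi\ii))/\Gamma(1+D_i/(2\pi\ii))$ used for $i\in I_d$ in the proof of Proposition \ref{contributions}, which is what the paper actually computes with downstream. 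Noting this explicitly, rather than silently matching symbols, is a genuine improvement over ``left to the reader''; you are not missing an idea, you are flagging an imprecision in the lemma's statement that the paper's later proofs work around by substituting the $l=-d$ term directly rather than invoking Lemma \ref{lead-d}'s displayed formula.
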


\begin{proof}
The proof is analogous to that of Lemma \ref{lead-c} and is left to the reader.
\end{proof}

Now we use this information about the asymptotic behavior of $\Gamma$ and $\Gamma^\circ$ to compute the constant
$ \langle\Gamma,\Gamma^\circ\rangle=\sum_{c,d,I}\xi_{c,d,I}\operatorname{Vol}_I\left(\prod_{i\in I}x_i\right) \Gamma_c\otimes\Gamma^\circ_d$
where $\xi$ are defined in Theorem \ref{main theorem 1}.

\smallskip
As in Section \ref{sec.pairing},
let $I$ be a subset of $\{1,\ldots,n\}$ of size ${\rm rk} N$, which may or may not be a cone in $\Sigma$. Let $c$ and $d$ be 
such that $c+d = \sum_{i\in I} v_i$ and $c+\varepsilon v, d-\varepsilon v \in \sum_{i\in I}\R_{\geq 0} v_i$ for small $\varepsilon >0$. 
The following observation is key. 
\begin{proposition}\label{most0}
Under the above assumptions on $c,d,I$ we have 
$$
\lim_{t\to +\infty} \prod_{i=1}^n (t^{-\psi(v_i)}x_i) \Gamma_c (t^{-\psi(v_1)} x_1, \ldots)\Gamma^\circ_d(t^{-\psi(v_1)} x_1, \ldots) =0
$$
unless $\gamma(d)=\gamma^\vee(c)$ and $I$ contains $\sigma(\gamma(c))$.  
\end{proposition}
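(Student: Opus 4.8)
The plan is to pass to the rescaled arguments $t^{-\psi(v_j)}x_j$, plug in the leading asymptotics of Lemmas~\ref{lead-c} and \ref{lead-d}, and let the convexity of $\psi$ isolate the one term of the product that is not forced to die as $t\to+\infty$. Since $\xi_{c,d,I}\ne 0$ forces $\dim\sigma_I=\operatorname{rk}N$, the standing hypotheses say $\{v_i:i\in I\}$ is a basis of $N_\R$, so we may write $c=\sum_{i\in I}c_iv_i$, $d=\sum_{i\in I}d_iv_i$ with $c_i,d_i\ge 0$ and, comparing with $c+d=\sum_{i\in I}v_i$, with $c_i+d_i=1$ for every $i\in I$. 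After the rescaling the monomial prefactor $\prod_{j=1}^n(t^{-\psi(v_j)}x_j)$ becomes $t^{-\sum_j\psi(v_j)}\prod_j x_j$; meanwhile by Lemma~\ref{lead-c} each twisted-sector component of $\Gamma_c(t^{-\psi(v_1)}x_1,\dots)$ is $O(t^{\psi(c)})$ and is $o(t^{\psi(c)})$ unless it is the $\gamma^\vee(c)$-component, and symmetrically for $\Gamma^\circ_d$. So every term of the product already tends to $0$ except the one built from the $\gamma^\vee(c)$-component of $\Gamma_c$ and the $\gamma^\vee(d)$-component of $\Gamma^\circ_d$; that surviving term carries the power $t^{\psi(c)+\psi(d)-\sum_j\psi(v_j)}$ together with the unipotent factor $\prod_j t^{-(D_j\otimes 1+1\otimes D_j)\psi(v_j)/(2\pi\ii)}$ produced by the exponential prefactors of the two lemmas.

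The convexity of $\psi$ then pins down the sign of this power. Applying $\psi(\sum\alpha_iw_i)\le\sum\alpha_i\psi(w_i)$ to $c=\sum_{c_i>0}c_iv_i$ and to $d=\sum_{d_i>0}d_iv_i$ and adding gives $\psi(c)+\psi(d)\le\sum_{i\in I}\psi(v_i)$, with equality exactly when $\{v_i:i\in I,\ c_i>0\}$ lies in one cone of $\Sigma$ and likewise $\{v_i:i\in I,\ d_i>0\}$; combined with $\sum_{i\in I}\psi(v_i)\le\sum_{j=1}^n\psi(v_j)$ this forces the exponent $\psi(c)+\psi(d)-\sum_j\psi(v_j)$ to be strictly negative — hence the limit $0$ — unless the two equalities hold. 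In the equality case the exponent is $0$, the remaining $o$-contributions still vanish, and one is reduced to the product of the two leading coefficients inside $H_{\gamma^\vee(c)}\otimes H^c_{\gamma^\vee(d)}$.

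It then remains to treat that equality case. Here I would show that, unless the simplicial cone $\sigma_c:=\sum_{c_i>0}\R_{\ge0}v_i$ is itself a cone of $\Sigma$, the leading term of $\Gamma_c$ (indexed by the special solution $-c\in L_{c,\gamma^\vee(c)}$) is routed by the construction onto a product of $D_i$'s applied to $1_{\gamma^\vee(c)}$ that is killed by the Stanley--Reisner and star relations of Proposition~\ref{coh} — here one uses that $1/\Gamma(1-c_i+z)$ has a first-order zero at $z=0$ exactly when $c_i$ is a positive integer — so that the surviving coefficient is $0$; the mirror argument through $\Gamma^\circ_d$ and Proposition~\ref{cohcmp} handles $d$. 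When both $\sigma_c$ and $\sigma_d:=\sum_{d_i>0}\R_{\ge0}v_i$ are cones of $\Sigma$, $c$ lies in the relative interior of $\sigma_c$, so $\sigma(c)=\sigma_c$ and $\sigma(\gamma(c))$ is the face of $\sigma_c$ spanned by the $v_i$ with $c_i\notin\Z$; hence $\sigma(\gamma(c))\subseteq I$. Finally, since $d_i=1-c_i\in[0,1]$ for $i\in I$, one has $\gamma(d)=\sum_{0<d_i<1}d_iv_i=\sum_{0<c_i<1}(1-c_i)v_i=\gamma^\vee(c)$. These are exactly the two stated exceptional conditions, and the limit is $0$ in every remaining situation.

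I expect the third paragraph to be the main obstacle. The first two paragraphs are a routine combination of the asymptotic lemmas with convexity of $\psi$, but establishing the dichotomy ``either $\sigma_c$ and $\sigma_d$ are cones of $\Sigma$, or the surviving leading coefficient vanishes'' requires carrying the combinatorics of $\Sigma$ — especially $\prod_{j\in J}D_j=0$ for $J\notin\operatorname{Star}$ and $D_iF_I=0$ for $I\cup\{i\}\notin\operatorname{Star}$ — through the presentations of $H_{\gamma^\vee(c)}$ and $H^c_{\gamma^\vee(d)}$ of Propositions~\ref{coh} and \ref{cohcmp} in lockstep with the pole structure of the $\Gamma$-factors, and in addition checking that in the equality case the unipotent factor of the first paragraph produces an honest finite limit.
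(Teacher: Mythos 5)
Your overall strategy is the paper's: rescale, feed in the leading asymptotics of Lemmas \ref{lead-c} and \ref{lead-d}, and let convexity of $\psi$ control the exponent of $t$. Two points, however, do not hold up as written. First, the monomial prefactor should be read as the monomial occurring in the pairing, namely $x_I=\prod_{i\in I}x_i$ (compare Corollary \ref{key}, where only $\operatorname{Vol}_I$ survives after the limit). Your bridging inequality $\sum_{i\in I}\psi(v_i)\le\sum_{j=1}^n\psi(v_j)$ is not justified: the values $\psi(v_j)$ for $j\notin I$ are generic reals and need not be nonnegative; moreover, with the full product your own bookkeeping would make the exponent equal to $-\sum_{j\notin I}\psi(v_j)\neq 0$ in the equality case, contradicting your claim that it is $0$ there. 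With the prefactor $x_I$, the convexity bound $\psi(c)+\psi(d)\le\sum_{i\in I}\psi(v_i)$ of your second paragraph (identical to \eqref{cd-bound}) is exactly what is needed: the exponent of $t$ is nonpositive, and strictly negative unless both inequalities are equalities.

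Second, your third paragraph is the genuine gap: the dichotomy ``either the cones spanned by $\{v_i: c_i>0\}$ and $\{v_i: d_i>0\}$ are cones of $\Sigma$, or the surviving leading coefficient is annihilated by the Stanley--Reisner and star relations'' is only announced, and you yourself flag it as the main obstacle, so the argument is not complete as submitted. The paper does not route through this machinery: equality in \eqref{cd-bound} is read off from the defining property of $\psi$ as saying that $\{v_i:i\in I,\ c_i>0\}$ and $\{v_i:i\in I,\ d_i>0\}$ each lie in a cone of $\Sigma$, and from there the two asserted conditions follow by exactly the computation you carry out at the end of your third paragraph ($\sigma(c)$ is the cone spanned by the $v_i$ with $c_i>0$, hence $\sigma(\gamma(c))\subseteq I$, and $\gamma(d)=\sum_{0<d_i<1}d_i v_i=\gamma^{\vee}(c)$). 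In other words, the step you were most worried about is, in the paper, an immediate consequence of the equality criterion for $\psi$; the cohomological vanishing you propose would only matter in the degenerate situation where some $v_i$, $i\in I$, fail to generate rays of $\Sigma$ (a point the paper treats very briefly), and in any case you have not supplied it, nor the verification that the logarithmic (nilpotent) factors cause no trouble in the equality case.
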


\begin{proof}
Since $c$ and $d$ are contained in $\sum_{i\in I}\R_{\geq 0} v_i$ and $c+d = v_I$, we have
$$
c=\sum_{i\in I} \alpha_i v_i,~~d=\sum_{i\in I} (1-\alpha_i)v_i
$$
with $\alpha_i\in [0,1]$. Convexity of $\psi$ implies that 
\begin{align}\label{cd-bound}
\psi(c) \leq \sum_i \alpha_i \psi(v_i),~~\psi(d) \leq \sum_i (1- \alpha_i )\psi(v_i)
\end{align}
which leads to $\psi(c) + \psi(d) - \sum_i \psi(v_i)\leq 0$, so we can use Propositions \ref{lead-c} and \ref{lead-d} to see that the leading power of $t$ is nonpositive. In fact, it is negative, unless the inequalities in \eqref{cd-bound} are equalities, which means that the subset of $I$ for which $\alpha_i>0$ is a cone in $\Sigma$, and similarly for the subset of $\alpha_i<1$.  This implies the claim.
\end{proof}

\begin{proposition}\label{contributions}
If $\gamma(c)=\gamma^\vee, ~\gamma(d) = \gamma=\sum_{i\in I}\gamma_i v_i$, then we define $I_c$ to be the subset of $I$ such that the coefficients $c_i$ of $c$  are equal to $1$ and similarly for $I_d$.
The asymptotic behavior as $t\to \infty$ is 
\begin{align*}
&\prod_{i=1}^n (t^{-\psi(v_i)}x_i) \Gamma_c (t^{-\psi(v_1)} x_1, \ldots)\Gamma^\circ_d(t^{-\psi(v_1)} x_1, \ldots) 
= o(1) +\frac{1}{(2\pi \ii)^{\operatorname{rk}N-|\sigma(\gamma)|}}
\\
		&\cdot\frac{D_{I_c}}{\prod_{i\in\sigma(\gamma)}\Gamma(\gamma_i+\frac{D_i}{2\pi {\rm i}})\prod_{i\in\operatorname{Star}(\sigma(\gamma))\backslash\sigma(\gamma)}\Gamma(1+\frac{D_i}{2\pi {\rm i}})}
		\prod_{i=1}^n {\rm e}^{\frac {D_i}{2\pi{\rm i}}( \log x_i - \psi(v_i) \log t)}\\
		&\bigotimes \frac{F_{I_d}}{\prod_{i\in\sigma(\gamma)}\Gamma(1-\gamma_i+\frac{D_i}{2\pi {\rm i}})\prod_{i\in\operatorname{Star}(\sigma(\gamma))\backslash\sigma(\gamma)}\Gamma(1+\frac{D_i}{2\pi {\rm i}})}
		 \prod_{i=1}^n {\rm e}^{\frac {D_i}{2\pi{\rm i}}( \log x_i - \psi(v_i) \log t)}
		\end{align*}
in $H_{\gamma}\otimes H_{\gamma^{\vee}}^c$.
\end{proposition}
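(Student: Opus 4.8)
The plan is to simply combine the two asymptotic expansions from Lemmas \ref{lead-c} and \ref{lead-d}, keep track of the power of $t$, and then read off the surviving term in $H_\gamma\otimes H_{\gamma^\vee}^c$. First I would invoke Proposition \ref{most0}: under the hypothesis $\gamma(c)=\gamma^\vee(d)$ (equivalently $\gamma(d)=\gamma$, $\gamma(c)=\gamma^\vee$) and $\sigma(\gamma)\subseteq I$, all summands except the distinguished ones vanish in the limit, so it suffices to multiply the leading terms
$$
\Gamma_c \sim t^{\psi(c)}\prod_i {\rm e}^{\frac{D_i}{2\pi\ii}(\log x_i-\psi(v_i)\log t)}\prod_i \frac{x_i^{-c_i}}{\Gamma(1-c_i+\frac{D_i}{2\pi\ii})},
$$
and the analogous expression for $\Gamma^\circ_d$ with the extra factor $(\prod_{i\in\sigma(d)}D_i^{-1})F_{\sigma(d)}$. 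Since $c+d=v_I$, the prefactor $\prod_i(t^{-\psi(v_i)}x_i)$ combined with $t^{\psi(c)+\psi(d)}=t^{\psi(v_I)}$ (using that equality holds in \eqref{cd-bound} exactly when we are in the nonvanishing case) cancels the overall power of $t$ and contributes $\prod_i x_i$; the factors $x_i^{-c_i}x_i^{-d_i}=x_i^{-1}$ then cancel the $\prod_i x_i$, leaving only the exponential logarithmic factors displayed in the statement.

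The main bookkeeping step is to rewrite the Gamma-factor denominators. For $i\in I\setminus\sigma(\gamma)$ the coefficients of $c$ and $d$ lie in $\{0,1\}$: if $c_i=0$ then $1-c_i=1$ and the factor is $\Gamma(1+\frac{D_i}{2\pi\ii})$; if $c_i=1$ then $\Gamma(1-c_i+\frac{D_i}{2\pi\ii})=\Gamma(\frac{D_i}{2\pi\ii})=\frac{2\pi\ii}{D_i}\Gamma(1+\frac{D_i}{2\pi\ii})$, which is where the $D_{I_c}$ in the numerator and the powers of $2\pi\ii$ come from — each $i\in I_c$ produces one factor $\frac{D_i}{2\pi\ii}$, and symmetrically each $i\in I_d$ produces $F$-module structure via $D_i^{-1}F_{\sigma(d)}$ colliding with the $\frac{D_i}{2\pi\ii}$ from the reflection formula, turning $D_i^{-1}F_{\sigma(d)}$ into (a multiple of) $F_{I_d}$ using the relation $D_iF_I-F_{I\cup\{i\}}$ of Proposition \ref{cohcmp}. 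For $i\in\sigma(\gamma)$ the coefficient $c_i$ equals $1-\gamma_i$ (since $\gamma(c)=\gamma^\vee$), so $\Gamma(1-c_i+\frac{D_i}{2\pi\ii})=\Gamma(\gamma_i+\frac{D_i}{2\pi\ii})$, matching the stated denominator; symmetrically $d_i$ contributes $\gamma_i$ and hence $\Gamma(1-\gamma_i+\frac{D_i}{2\pi\ii})$. For $i\notin\operatorname{Star}(\sigma(\gamma))$ one has $D_i=0$ in $H_\gamma$ (resp.\ $H_{\gamma^\vee}^c$), so $\Gamma(1+\frac{D_i}{2\pi\ii})=1$ and these factors drop out, leaving exactly the product over $\operatorname{Star}(\sigma(\gamma))\setminus\sigma(\gamma)$.

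The step I expect to be the actual obstacle is making precise the identity $D_i^{-1}F_{\sigma(d)}\cdot\prod_{i\in\sigma(d)}D_i^{-1}$ acted on by the reflection-formula factors equals $F_{I_d}$ up to the claimed scalar — i.e.\ checking that all the reflection formulas $\Gamma(z)\Gamma(1-z)=\pi/\sin(\pi z)$, or rather just $\Gamma(z)=\Gamma(1+z)/z$, are applied consistently and that no spurious $\sin$ or $\pi$ factors survive, and that the resulting element genuinely lands in $H_\gamma\otimes H_{\gamma^\vee}^c$ rather than a formal completion (here one uses that $D_i$ is nilpotent in these finite-dimensional cohomology rings, so $\frac{1}{\Gamma(1+\frac{D_i}{2\pi\ii})}$ and $\frac{D_i}{2\pi\ii}\cdot\frac{1}{\Gamma(1+\frac{D_i}{2\pi\ii})}$ are honest polynomials in $D_i$). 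The $o(1)$ term is controlled exactly as in Lemma \ref{lead-c}: the remaining lattice points satisfy \eqref{inequality} strictly, uniformly by at least some $\varepsilon>0$, and absolute convergence at the chosen $\bf x$ lets one sum them up with a decaying bound.
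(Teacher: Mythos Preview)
Your proposal is correct and follows essentially the same approach as the paper: invoke (the proof of) Proposition~\ref{most0} to reduce to the single leading terms from Lemmas~\ref{lead-c} and~\ref{lead-d}, then use the identity $\frac{1}{\Gamma(1-c_i+\frac{D_i}{2\pi\ii})}=\frac{D_i/(2\pi\ii)}{\Gamma(1+\frac{D_i}{2\pi\ii})}$ when $c_i=1$ (and symmetrically for $d_i=1$) to produce the $D_{I_c}$, $F_{I_d}$, and $(2\pi\ii)^{-(\rk N-|\sigma(\gamma)|)}$ factors. The bookkeeping you flag as a potential obstacle is exactly what the paper handles in one line (``and similarly for $i\in I_d$''), so there is no genuine difficulty there; your additional remarks on the $x_i$-cancellation, the $i\notin\operatorname{Star}(\sigma(\gamma))$ case, and nilpotency of $D_i$ are all correct elaborations that the paper leaves implicit.
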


\begin{proof}
The proof of Proposition \ref{most0} shows that the only contribution other than $o(1)$ can come from the terms that give better than 
$o(t^{\psi(c)})$ and $o(t^{\psi(d)})$ contributions to the asymptotic behavior of $\Phi_c$ and $\Phi_d$. So by Propositions \ref{lead-c} and \ref{lead-d} the only contributions come from elements of $L_{c,\gamma^\vee}$ and $L_{d,\gamma}$ given by 
$$
-c=\sum_{i\in I} (-c_i)v_i,~~-d=\sum_{i\in I} (c_i-1) v_i.
$$
For $i\in \sigma(\gamma)$ we note that $\gamma_i=1-c_i$ if $c_i\in (0,1)$. For $i\in I_c$ we use
$$
\frac 1 {\Gamma(1-c_i +\frac{D_i}{2\pi{\rm i}})} = \frac 1 {\Gamma(\frac{D_i}{2\pi{\rm i}})} =
\frac {\frac {D_i}{2\pi{\rm i}}}{\Gamma(1 +\frac{D_i}{2\pi{\rm i}})} 
$$
and similarly for $i\in I_d$, and the result follows.
\end{proof}

Now we recall that $\langle \Gamma,\Gamma^\circ \rangle$ is constant.
\begin{corollary}\label{key}
The constant pairing $\langle \Gamma,\Gamma^\circ \rangle$ lies in $\bigoplus_\gamma H_\gamma \otimes H_{\gamma^\vee}^c$ and is given 
by 
\begin{align*}
		\frac{1}{(2\pi \ii)^{\operatorname{rk}N}}\bigoplus_{\gamma}\sum_{\substack{c\in C,d\in C^{\circ} \\ |I|=\operatorname{rk}N}}\xi_{c,d,I}\operatorname{Vol}_I(2\pi \ii)^{|\sigma(\gamma)|}\frac{D_{I_c}}{\widehat{\Gamma}_{\gamma}}\otimes\frac{F_{I_d}}{\widehat{\Gamma}_{\gamma^{\vee}}}
	\end{align*}
where $\widehat{\Gamma}_{\gamma} = \prod_{i\in\sigma(\gamma)}\Gamma(\gamma_i+\frac{D_i}{2\pi {\rm i}})\prod_{i\in\operatorname{Star}(\sigma(\gamma))\backslash\sigma(\gamma)}\Gamma(1+\frac{D_i}{2\pi {\rm i}})$ and similarly 
for  $\widehat{\Gamma}_{\gamma^\vee}$.
There also holds for each $k$
\begin{align*}
0=\bigoplus_{\gamma}\sum_{\substack{c\in C,d\in C^{\circ} \\ |I|=\operatorname{rk}N}}\xi_{c,d,I}\operatorname{Vol}_I(2\pi \ii)^{|\sigma(\gamma)|}\Big(D_k\frac{D_{I_c}}{\widehat{\Gamma}_{\gamma}}\Big)\otimes\frac{F_{I_d}}{\widehat{\Gamma}_{\gamma^{\vee}}}
\\
+\bigoplus_{\gamma}\sum_{\substack{c\in C,d\in C^{\circ} \\ |I|=\operatorname{rk}N}}\xi_{c,d,I}\operatorname{Vol}_I(2\pi \ii)^{|\sigma(\gamma)|}\frac{D_{I_c}}{\widehat{\Gamma}_{\gamma}}\otimes \Big( D_k\frac{F_{I_d}}{\widehat{\Gamma}_{\gamma^{\vee}}}\Big).
	\end{align*}
\end{corollary}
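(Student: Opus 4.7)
The plan is to exploit the constancy of $\langle\Gamma,\Gamma^\circ\rangle$ established in Theorem \ref{main theorem 1} by passing to the large K\"ahler limit $x_i\mapsto t^{-\psi(v_i)}x_i$, $t\to+\infty$, and extracting both assertions of the corollary from the asymptotic analysis in Propositions \ref{most0} and \ref{contributions}.

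For the explicit value of the constant, I would substitute $x_i\mapsto t^{-\psi(v_i)}x_i$ in the pairing and let $t\to+\infty$. By Proposition \ref{most0}, every term in the sum $\sum_{c,d,I}\xi_{c,d,I}\operatorname{Vol}_I\prod_{i\in I}x_i\,\Gamma_c\otimes\Gamma^\circ_d$ becomes $o(1)$ in this limit unless $\gamma(d)=\gamma^\vee(c)$ and $I\supseteq\sigma(\gamma(c))$, in which case Proposition \ref{contributions} provides the explicit leading form. I would organize the surviving terms by the common twisted sector $\gamma$ and consolidate the prefactor $1/(2\pi\ii)^{\operatorname{rk}N-|\sigma(\gamma)|}$ with a global $1/(2\pi\ii)^{\operatorname{rk}N}$ and an inner factor $(2\pi\ii)^{|\sigma(\gamma)|}$, which yields the first displayed formula. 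Membership in $\bigoplus_\gamma H_\gamma\otimes H^c_{\gamma^\vee}$ is built into the surviving indexing.

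For the $D_k$-relation, I would analyze the $\mathbf{x}$-dependence of the same limiting expression. The key observation is that for the surviving indices $c+d=v_I$ with $\{v_i\}_{i\in I}$ linearly independent, so $c_i+d_i=\chi(i\in I)$; hence the monomials $x_i^{-c_i}$ and $x_i^{-d_i}$ from the leading terms of $\Gamma_c$ and $\Gamma^\circ_d$ combine with the prefactor $\prod_{i\in I}x_i$ to give $1$. The only residual $\mathbf{x}$-dependence is then through the commuting operator exponentials $\prod_i e^{D_i(\log x_i-\psi(v_i)\log t)/(2\pi\ii)}$ acting on each of the two tensor factors. Applying $x_k\partial_k$ to this expression brings down the operator $\tfrac{1}{2\pi\ii}(D_k\otimes 1+1\otimes D_k)$ multiplying the constant obtained in the first part. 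Since $x_k\partial_k\langle\Gamma,\Gamma^\circ\rangle=0$ by constancy and the operator exponentials are invertible, the action of $D_k\otimes 1+1\otimes D_k$ on that constant must vanish for each $k$, which is exactly the second displayed identity.

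The main obstacle I anticipate is the exponent bookkeeping in the second step: one has to track the cancellation of all explicit powers of $x_i$ between the pairing prefactor and the leading-order monomials of $\Gamma_c$ and $\Gamma^\circ_d$ so that only the operator exponentials remain. A smaller technical point is justifying the interchange of $\lim_{t\to+\infty}$ with $x_k\partial_k$, which is supplied by the uniform-on-compacts convergence of the $\Gamma$-series from Proposition \ref{convgamma}.
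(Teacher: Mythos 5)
Your proposal is correct and takes essentially the same route as the paper: the paper also passes to the large K\"ahler limit, invokes Propositions \ref{most0} and \ref{contributions} to write $\langle\Gamma,\Gamma^\circ\rangle$ as a polynomial in the $\log x_i$, and then reads off the constant term for the first display and the coefficient of $\log x_k$ for the second, which is exactly what your $x_k\partial_k$ argument extracts.
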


\begin{proof}
Proposition \ref{contributions} gives the asymptotic behavior of $\langle \Gamma,\Gamma^\circ \rangle$ as a polynomial in $\log x_i$. However, we also know it is a constant by Theorem \ref{main theorem 1}.  The first statement of the proposition is reading off the constant term of the polynomial and the second statement is reading off the coefficient by $\log x_k$.
\end{proof}

\section{Euler characteristic pairing}\label{sec.euler}

Now we are ready to prove that the pairing of Gamma series $\langle\Gamma,\Gamma^{\circ}\rangle$ is inverse to the Euler characteristic pairing on $\mathbb{P}_{\Sigma}$. 
Before we state the main theorem of this section, we have the following useful observation, which is an orbifold analog of the 
relationship between the $\Gamma$-class and the Todd class of a smooth manifold. Recall that $*$ is the duality map on $H$ defined in Proposition \ref{def-eulerpairing}.

\begin{lemma}\label{gamma class and todd class}
	$(\widehat{\Gamma}_{\gamma})^*\widehat{\Gamma}_{\gamma^{\vee}}=(2\pi \ii)^{|\sigma(\gamma)|}(-1)^{\deg\gamma^{\vee}}\operatorname{Td}(\gamma^{\vee})$.
\end{lemma}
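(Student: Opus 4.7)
The plan is to apply the reflection identity $\Gamma(z)\Gamma(1-z)=\pi/\sin(\pi z)$ factor by factor, then expand the resulting sines as exponentials and match the orbifold Todd class. Noting that $\sigma(\gamma^{\vee}) = \sigma(\gamma) =: \sigma$ and $(\gamma^{\vee})_i = 1-\gamma_i$ for $i\in\sigma$, I would pair each factor of $(\widehat{\Gamma}_\gamma)^*$ against the corresponding factor of $\widehat{\Gamma}_{\gamma^{\vee}}$. For $i\in\sigma$ the pairing is
\begin{align*}
\Gamma\!\left(\gamma_i - \tfrac{D_i}{2\pi\ii}\right)\Gamma\!\left(1-\gamma_i+\tfrac{D_i}{2\pi\ii}\right) = \frac{\pi}{\sin(\pi\gamma_i - \tfrac{D_i}{2\ii})},
\end{align*}
while for $i\in\operatorname{Star}(\sigma)\setminus\sigma$ the identity $\Gamma(1+z)\Gamma(1-z)=\pi z/\sin(\pi z)$ with $z = D_i/(2\pi\ii)$ gives $(D_i/(2\ii))/\sin(D_i/(2\ii))$.

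Next, expand each sine via $\sin(w)=(e^{\ii w}-e^{-\ii w})/(2\ii)$ and factor a common exponential out of each denominator. A short computation rewrites the $i\in\sigma$ factor as $-2\pi\ii\, e^{\ii\pi\gamma_i - D_i/2}/(1-e^{2\pi\ii\gamma_i - D_i})$ and the $i\in\operatorname{Star}\setminus\sigma$ factor as $D_i\, e^{-D_i/2}/(1-e^{-D_i})$. Multiplying and separating constants, exponentials, and remaining denominators produces
\begin{align*}
(\widehat{\Gamma}_\gamma)^*\widehat{\Gamma}_{\gamma^{\vee}} = (-2\pi\ii)^{|\sigma|}\, e^{\ii\pi\deg\gamma}\, e^{-\frac12\sum_{i\in\operatorname{Star}}D_i}\, \operatorname{Td}(\gamma^{\vee}),
\end{align*}
where the denominators have been packaged into $\operatorname{Td}(\gamma^{\vee})$ under the orbifold convention that the factor $1-e^{-D_i}$ for $i\in\sigma$ stands for the normal-bundle-twisted $1 - e^{-D_i - 2\pi\ii(\gamma^{\vee})_i} = 1 - e^{2\pi\ii\gamma_i - D_i}$.

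Finally, one simplifies the three prefactors. Specializing the defining linear relations of $H_{\gamma^{\vee}}$ to $\mu = \deg\in N^{\vee}$ (and using $D_i=0$ for $i\notin\operatorname{Star}$) gives $\sum_{i\in\operatorname{Star}}D_i = 0$, hence $e^{-\frac12\sum_{i\in\operatorname{Star}}D_i}=1$. Since $\gamma\in N$, the quantity $\deg\gamma = \sum_{i\in\sigma}\gamma_i$ is an integer, so $e^{\ii\pi\deg\gamma} = (-1)^{\deg\gamma}$. Combined with $(-2\pi\ii)^{|\sigma|} = (-1)^{|\sigma|}(2\pi\ii)^{|\sigma|}$ and the identity $\deg\gamma + \deg\gamma^{\vee} = |\sigma|$ (immediate from $(\gamma^{\vee})_i = 1-\gamma_i$), the total sign becomes $(-1)^{|\sigma|+\deg\gamma} = (-1)^{\deg\gamma^{\vee}}$, giving the right-hand side $(2\pi\ii)^{|\sigma|}(-1)^{\deg\gamma^{\vee}}\operatorname{Td}(\gamma^{\vee})$. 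The only genuine subtlety is the orbifold convention for $\operatorname{Td}(\gamma^{\vee})$ on $i\in\sigma$ flagged above; once that interpretation is made explicit, the remainder is routine exponential-algebra bookkeeping.
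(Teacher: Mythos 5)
Your proof is correct and follows essentially the same route as the paper's: a factor-by-factor application of the Euler reflection formula (the paper writes the resulting identities directly in exponential form, whereas you pass through $\pi/\sin(\pi z)$ first, but the bookkeeping of exponentials, the use of $\sum_{i\in\operatorname{Star}(\sigma(\gamma))}D_i=0$, and the sign count via $\deg\gamma+\deg\gamma^{\vee}=|\sigma(\gamma)|$ are identical). Your explicit flagging of the orbifold convention — that for $i\in\sigma(\gamma)$ the denominator factor in $\operatorname{Td}(\gamma^{\vee})$ must be read as the twisted $1-\ee^{2\pi\ii\gamma_i-D_i}$ rather than the literal $1-\ee^{-D_i}$ — is in fact needed to make the final identification valid, and the paper's closing display tacitly uses this same convention without stating it.
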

\begin{proof}
	We can expand $(\widehat{\Gamma}_{\gamma})^*\widehat{\Gamma}_{\gamma^{\vee}}$ as
	\begin{align*}
		&\prod_{i\in\sigma(\gamma)}\Gamma(\gamma_i+\frac{D_i}{2\pi \ii})^* \Gamma(1-\gamma_i+\frac{D_i}{2\pi \ii})
		%\\
		&\cdot\prod_{i\in\operatorname{Star}(\sigma(\gamma))\backslash\sigma(\gamma)}\Gamma(1+\frac{D_i}{2\pi \ii})^*\Gamma(1+\frac{D_i}{2\pi \ii})\\
		=&\prod_{i\in\sigma(\gamma)}\Gamma(\gamma_i-\frac{D_i}{2\pi \ii}) \Gamma(1-\gamma_i+\frac{D_i}{2\pi \ii})
		%\\
		&\cdot\prod_{i\in\operatorname{Star}(\sigma(\gamma))\backslash\sigma(\gamma)}\Gamma(1-\frac{D_i}{2\pi \ii})\Gamma(1+\frac{D_i}{2\pi \ii}).
	\end{align*}
We use the identity $\Gamma(z)\Gamma(1-z)
	%=\frac{\pi}{\sin{\pi z}}
	=-\frac{2\pi \ii \,\ee^{\pi\ii z}}{1-\ee^{2\pi\ii z}}$ to rewrite the first product as
\begin{align*}
	&(-2\pi\ii)^{|\sigma(\gamma)| }\ee^{\sum_{i\in \sigma(\gamma) }  \pi \ii \gamma_i} \ee^{-\frac 12\sum_{i\in \sigma(\gamma) } D_i} 
	\prod_{_{i\in\sigma(\gamma)}}\frac 1{1-\ee^{2\pi\ii \gamma_i-D_i}}.
%\\
%		\prod_{i\in\sigma(\gamma)}&\Gamma(\gamma_i-\frac{D_i}{2\pi \ii}) \Gamma(1-\gamma_i+\frac{D_i}{2\pi \ii})=\prod_{i\in\sigma(\gamma)}\frac{2\pi \ii}{e^{i\pi (\gamma_i-D_i/2\pi \ii)}-e^{-i\pi (\gamma_i-D_i/2\pi \ii)}}\\
%		&=(2\pi \ii)^{|\sigma(\gamma)|}\prod_{i\in\sigma(\gamma)}\frac{1}{e^{-i\pi\gamma_i+D_i/2}}\prod_{i\in\sigma(\gamma)}\frac{1}{e^{2\pi \ii\gamma_i-D_i}-1}\\
%		&=(2\pi \ii)^{|\sigma(\gamma)|}(-1)^{\deg\gamma}e^{-\frac{1}{2}\sum_{i\in\sigma(\gamma)}D_i}\prod_{i\in\sigma(\gamma)}\frac{1}{ch_{\gamma^{\vee}}(R_i^{-1})-1}\\
%		&=(2\pi \ii)^{|\sigma(\gamma)|}(-1)^{|\sigma(\gamma)|+\deg\gamma}e^{-\frac{1}{2}\sum_{i\in\sigma(\gamma)}D_i}\prod_{i\in\sigma(\gamma)}\frac{1}{1-ch_{\gamma^{\vee}}(R_i^{-1})}\\
%		&=(2\pi \ii)^{|\sigma(\gamma)|}(-1)^{\deg\gamma^{\vee}}e^{-\frac{1}{2}\sum_{i\in\sigma(\gamma)}D_i}\prod_{i\in\sigma(\gamma)}\frac{1}{1-ch_{\gamma^{\vee}}(R_i^{-1})}
	\end{align*}
For the second product, we use 
$
\Gamma(1-\frac{z}{2\pi \ii})\Gamma(1+\frac{z}{2\pi \ii})=\frac{z\ee^{-\frac{z}{2}}}{1-\ee^{-z}}
$
to rewrite it as
\begin{align*}
	\ee^{-\frac{1}{2}\sum_{i\in\operatorname{Star}(\sigma(\gamma))\backslash\sigma(\gamma)} D_i}\prod_{i\in\operatorname{Star}(\sigma(\gamma))\backslash\sigma(\gamma)}\frac{D_i}{1-\ee^{-D_i}}.
\end{align*}
Putting the two formulas together, we get
\begin{align*}
&(\widehat{\Gamma}_{\gamma})^*\widehat{\Gamma}_{\gamma^{\vee}}=(2\pi \ii)^{|\sigma(\gamma)|}(-1)^{\deg\gamma^{\vee}}\ee^{-\frac{1}{2}\sum_{i\in\operatorname{Star}(\sigma(\gamma))}D_i}
\frac {\prod_{i\in\operatorname{Star}(\sigma(\gamma))\backslash\sigma(\gamma)}D_i }
{ \prod_{i\in\operatorname{Star}(\sigma(\gamma))}
{1-\ee^{-D_i}}}
\\
&=(2\pi \ii)^{|\sigma(\gamma)|}(-1)^{\deg\gamma^{\vee}}\operatorname{Td}(\gamma^{\vee})
\end{align*}
where we used $\sum_{i\in\operatorname{Star}(\sigma(\gamma))}D_i=\sum_{i=1}^n D_i=0$.
\end{proof}

Now we can state and prove the main theorem of this section. Recall that we defined the pairing $\langle\cdot,\cdot\rangle$ 
on solutions of the better-behaved GKZ systems. When we apply it to $\Gamma$ and $\Gamma^\circ$, we get a constant element of $H\otimes H^c$.
\begin{theorem}\label{main theorem 2}
The constant pairing $\langle \Gamma,\Gamma^\circ \rangle$ is equal up to a constant factor to the inverse of the Euler characteristic pairing $\chi(-,-):H\otimes H^c\rightarrow\C$.
\end{theorem}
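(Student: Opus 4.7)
The plan is to combine the explicit formula of Corollary \ref{key} for $\langle\Gamma,\Gamma^\circ\rangle$ with the identity from Lemma \ref{gamma class and todd class}. By Proposition \ref{def-eulerpairing} the Euler pairing $\chi$ is supported on the subspace $\bigoplus_\gamma H_\gamma\otimes H_{\gamma^\vee}^c \subseteq H\otimes H^c$, and by Corollary \ref{key} so is $\langle\Gamma,\Gamma^\circ\rangle$. It therefore suffices to work one twisted sector at a time. The claim that $\langle\Gamma,\Gamma^\circ\rangle$ is inverse to $\chi$ (up to a constant) amounts to the statement that for the $\gamma$-block $T_\gamma$ of $\langle\Gamma,\Gamma^\circ\rangle$ one has $(\mathrm{id}_{H_\gamma}\otimes\chi(a,-))(T_\gamma) = c\cdot a$ for every $a\in H_\gamma$, where $c$ is a constant independent of $\gamma$ and $a$. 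This is the identity I aim to verify.

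To reduce it to something concrete I substitute the formula from Corollary \ref{key} for $T_\gamma$ and expand $\chi$ using Proposition \ref{def-eulerpairing}. The combination $\operatorname{Td}(\gamma^\vee)/\widehat{\Gamma}_{\gamma^\vee}$ arises naturally, and Lemma \ref{gamma class and todd class} replaces it by $(\widehat{\Gamma}_\gamma)^*/\bigl((2\pi\ii)^{|\sigma(\gamma)|}(-1)^{\deg\gamma^\vee}\bigr)$. Using $(xy)^* = x^*y^*$, the surviving $\widehat{\Gamma}_\gamma$ and $(\widehat{\Gamma}_\gamma)^*$ factors cancel cleanly, and after collecting the scalar prefactors the statement reduces to the purely cohomological identity
\[
\sum_{c,d,I}\xi_{c,d,I}\operatorname{Vol}_I\, D_{I_c}\int_{\gamma^\vee}\tilde a^*\,F_{I_d} \;=\; C_\gamma\,\tilde a, \qquad \tilde a\in H_\gamma,
\]
with $C_\gamma$ an explicit constant depending only on the combinatorial data of $\gamma$ (powers of $2\pi\ii$, the sign $(-1)^{\deg\gamma^\vee}$, and $|\operatorname{Box}(\sigma(\gamma))|$). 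These prefactors should ultimately collapse to a single universal constant, independent of $\gamma$.

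The main obstacle is the cohomological identity displayed above, which I expect to prove by a resolution-of-the-diagonal argument modeled on the Fulton--Sturmfels fan displacement formula \cite{FS}. The right-hand side is the identity endomorphism of $H_\gamma$ relative to the Poincar\'e-style pairing $(x,y)\mapsto \int_{\gamma^\vee} x^*y$, while the left-hand side is precisely a fan-displacement sum: the coefficient $\xi_{c,d,I}$ isolates maximal cones $\sigma_I$ for which the shifts $c+\varepsilon v$ and $d-\varepsilon v$ lie in $\sigma_I^\circ$, which is the orbifold analog of the Fulton--Sturmfels transverse intersection condition. To verify the identity concretely I would test it against the natural basis $\{D_J\}_{J\subseteq \operatorname{Star}(\sigma(\gamma))\setminus\sigma(\gamma)}$ of $H_\gamma$: the Stanley--Reisner relations kill the off-diagonal terms (since $D_iD_j=0$ whenever $\{i,j\}$ is not a cone of $\Sigma$), and the remaining diagonal contribution collects to $C_\gamma$ via the fan-partition combinatorics driven by the generic displacement $v$. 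The delicate point is the orbifold extension of the classical Fulton--Sturmfels formula, which requires careful bookkeeping of the quotient fan $\Sigma/\sigma(\gamma)$, the twisted-sector generator $1_\gamma$, and the signs $(-1)^{\deg\gamma^\vee}$; as an immediate byproduct, nondegeneracy of $\chi$ and of the constant pairing of Theorem \ref{main theorem 1} follow.
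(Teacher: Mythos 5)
Your reduction to the per-sector cohomological identity is correct and parallels the paper's own reduction (after dividing by the invertible class $\widehat{\Gamma}_\gamma$, the paper reaches \eqref{inductionstatement}, which is essentially your displayed identity). However, your proposed method for establishing that identity has a real gap. You plan to prove it by an orbifold version of the Fulton--Sturmfels fan displacement formula, but that formula is stated for \emph{complete} toric varieties, and the paper's $\PP_\Sigma$ are non-complete Deligne--Mumford stacks; the paper explicitly observes that ``this formula cannot be applied to our case directly,'' and even lists an orbifold non-complete version of Fulton--Sturmfels as a \emph{consequence} of its main theorems rather than an ingredient. So the ``resolution-of-diagonal'' step you defer to is itself an unproven extension of roughly the same strength as the result you want.

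The piece you are missing is that Corollary \ref{key} contains \emph{two} statements, and you only use the first. The second — obtained by reading off the coefficient of $\log x_k$ in the constant pairing $\langle\Gamma,\Gamma^\circ\rangle$, which must vanish — is precisely the skew-symmetry relation
\[
0=\sum\xi_{c,d,I}\operatorname{Vol}_I\,(2\pi\ii)^{|\sigma(\gamma)|}\Big(D_k\tfrac{D_{I_c}}{\widehat{\Gamma}_\gamma}\otimes\tfrac{F_{I_d}}{\widehat{\Gamma}_{\gamma^\vee}}+\tfrac{D_{I_c}}{\widehat{\Gamma}_\gamma}\otimes D_k\tfrac{F_{I_d}}{\widehat{\Gamma}_{\gamma^\vee}}\Big),
\]
and this is exactly what drives the paper's induction on $\deg P$: the base case $P=1_\gamma$ is an explicit computation using Lemma \ref{gamma class and todd class}, and the inductive step multiplies \eqref{inductionstatement} by $D_k$ and compares against the skew-symmetry of $D_k$ under $\chi$, with the discrepancy killed precisely by the displayed relation. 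That turns a potentially hard combinatorial intersection-theory argument into a two-line induction. If you want to salvage your approach, you would either need to actually prove the orbifold non-complete Fulton--Sturmfels formula from scratch, or switch to the paper's induction, in which case the second half of Corollary \ref{key} is the key input you should cite rather than the Stanley--Reisner combinatorics.

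One smaller issue: ``the Stanley--Reisner relations kill the off-diagonal terms'' is too optimistic as stated, since $\{D_J\}$ is not an orthogonal basis for the Poincar\'e-style pairing $\int_{\gamma^\vee}x^*y$; the products $D_{I_c}\cdot D_J^*\cdot F_{I_d}$ involve cancellations coming from the linear relations $\sum_i\mu(v_i)D_i=0$ as well, not just the monomial relations.
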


\begin{proof}
	It's clear that we can consider each twisted sector individually. For a fixed $\gamma$, the statement is equivalent to the assertion that 
\begin{align*}
	\sum_{\substack{c\in C,d\in C^{\circ} \\ |I|=\operatorname{rk}N}}\xi_{c,d,I}\operatorname{Vol}_I(2\pi \ii)^{|\sigma(\gamma)|}\chi\left(P,\frac{F_{I_d}}{\widehat{\Gamma}_{\gamma^{\vee}}}\right)\frac{D_{I_c}}{\widehat{\Gamma}_{\gamma}}=P
\end{align*}
holds for all classes $P\in H_{\gamma}$. Since the class $\widehat{\Gamma}_{\gamma}$ is invertible in $H_{\gamma}$, dividing by it induces an automorphism on the cohomology, hence it suffices to prove
\begin{align}\label{inductionstatement}
	\sum_{\substack{c\in C,d\in C^{\circ} \\ |I|=\operatorname{rk}N}}\xi_{c,d,I}\operatorname{Vol}_I(2\pi \ii)^{|\sigma(\gamma)|}\chi\left(\frac{P}{\widehat{\Gamma}_{\gamma}},\frac{F_{I_d}}{\widehat{\Gamma}_{\gamma^{\vee}}}\right)\frac{D_{I_c}}{\widehat{\Gamma}_{\gamma}}=\frac{P}{\widehat{\Gamma}_{\gamma}}
\end{align}
for all $P$. We prove this by induction on the degree of $P$.

\smallskip
The base case $\deg{P}=0$ corresponds to $P=1_{\gamma}$. Since
$$
\chi\left(\frac{1_\gamma}{\widehat{\Gamma}_{\gamma}},\frac{F_{I_d}}{\widehat{\Gamma}_{\gamma^{\vee}}}\right)=0$$
unless $|I_d|=\rk N - |\sigma(\gamma)|$, the equation becomes
\begin{align}\label{base case}
	\sum_{|I_d|=\operatorname{rk}N-|\sigma(\gamma)|}\xi_{\gamma^{\vee},\gamma+v_{I_d},I_d\sqcup\sigma(\gamma)}\operatorname{Vol}_{I_d\sqcup\sigma(\gamma)}(2\pi \ii)^{|\sigma(\gamma)|}\chi\left(\frac{1_\gamma}{\widehat{\Gamma}_{\gamma}},\frac{F_{I_d}}{\widehat{\Gamma}_{\gamma^{\vee}}}\right)\frac{1_\gamma}{\widehat{\Gamma}_{\gamma}}=\frac{1_\gamma}{\widehat{\Gamma}_{\gamma}}.
\end{align}
Then by definition of $\chi$ and Lemma \ref{gamma class and todd class}, we have
\begin{align*}
	\chi\left(\frac{1_\gamma}{\widehat{\Gamma}_{\gamma}},\frac{F_{I_d}}{\widehat{\Gamma}_{\gamma^{\vee}}}\right)&=\frac{1}{|\operatorname{Box}(\sigma(\gamma))|}\int_{\gamma^{\vee}}\operatorname{Td}(\gamma^{\vee})\left(\frac{1}{\widehat{\Gamma}_{\gamma}}\right)^*\frac{F_{I_d}}{\widehat{\Gamma}_{\gamma^{\vee}}}\\
	&=\frac{1}{|\operatorname{Box}(\sigma(\gamma))|}\int_{\gamma^{\vee}}\frac{F_{I_d}}{(\widehat{\Gamma}_{\gamma})^*\widehat{\Gamma}_{\gamma^{\vee}}}\operatorname{Td}(\gamma^{\vee})\\
	&=\frac{1}{|\operatorname{Box}(\sigma(\gamma))|}\int_{\gamma^{\vee}}\frac{F_{I_d}}{(2\pi \ii)^{|\sigma(\gamma)|}(-1)^{\deg\gamma^{\vee}}}\\
	&=\frac{(-1)^{\deg\gamma^{\vee}}}{(2\pi \ii)^{|\sigma(\gamma)|}\operatorname{Vol}_{\overline{I_d}}|\operatorname{Box}(\sigma(\gamma))|}
\end{align*}
here $\operatorname{Vol}_{\overline{I_d}}$ denotes the volume of the cone $\overline{\sigma_{I_d}}$ in the quotient fan $\Sigma/\sigma(\gamma)$. Note that we have
\begin{align*}
	\operatorname{Vol}_{I_d\sqcup\sigma(\gamma)}=\operatorname{Vol}_{\overline{I_d}}|\operatorname{Box}(\sigma(\gamma))|
\end{align*}
hence \eqref{base case} becomes
\begin{align*}
	\sum_{|I_d|=\operatorname{rk}N-|\sigma(\gamma)|}(-1)^{\deg\gamma^{\vee}}\xi_{\gamma^{\vee},\gamma+v_{I_d},I_d\sqcup\sigma(\gamma)}=1.
\end{align*}
If we perturb $\gamma^{\vee}$ by $\varepsilon v$, then it will fall in the interior of exactly one maximal cone in $\Sigma$, and the corresponding coefficient $\xi$ is the only nonzero term in the sum above (recall the definition of $\xi_{c,d,I}$ in Theorem \ref{main theorem 1}), which is equal to
\begin{align*}
	(-1)^{\deg\gamma^{\vee}}(-1)^{\deg{\gamma^{\vee}}}=1
\end{align*}
So the base case is proved.

\smallskip
Now we assume the equality \eqref{inductionstatement} holds for all classes of degree less than $m$. Since the cohomology $H_{\gamma}$ is generated as an algebra by classes $D_k$, it suffices to prove the identity 
\begin{align*}
	\sum_{\substack{c\in C,d\in C^{\circ} \\ |I|=\operatorname{rk}N}}\xi_{c,d,I}\operatorname{Vol}_I(2\pi \ii)^{|\sigma(\gamma)|}\chi\left(\frac{D_k P}{\widehat{\Gamma}_{\gamma}},\frac{F_{I_d}}{\widehat{\Gamma}_{\gamma^{\vee}}}\right)D_{I_c}=D_kP
\end{align*}
for each $D_k P$ where $P\in H_\gamma$ is of degree $m-1$. Since $D_k$ is skew-symmetric with respect to the $\chi$ pairing, the above statement can be rewritten as 
\begin{align*}
	D_k P = - \sum_{\substack{c\in C,d\in C^{\circ} \\ |I|=\operatorname{rk}N}}\xi_{c,d,I}\operatorname{Vol}_I(2\pi \ii)^{|\sigma(\gamma)|}\chi\left(\frac{ P}{\widehat{\Gamma}_{\gamma}},\frac{ D_kF_{I_d}}{\widehat{\Gamma}_{\gamma^{\vee}}}\right)D_{I_c}.
\end{align*}
On the other hand, we can multiply the induction assumption for $P$ by $D_k$ to get 
\begin{align*}
	\sum_{\substack{c\in C,d\in C^{\circ} \\ |I|=\operatorname{rk}N}}\xi_{c,d,I}\operatorname{Vol}_I(2\pi \ii)^{|\sigma(\gamma)|}\chi\left(\frac{P}{\widehat{\Gamma}_{\gamma}},\frac{F_{I_d}}{\widehat{\Gamma}_{\gamma^{\vee}}}\right)D_k \,D_{I_c}= D_k P.
\end{align*}
Compare these two identities. It suffices to show
\begin{equation}\label{induction}
\begin{split}
	0=\sum_{\substack{c\in C,d\in C^{\circ} \\ |I|=\operatorname{rk}N}}\xi_{c,d,I}&\operatorname{Vol}_I\left(D_k\cdot\frac{D_{I_c}}{\widehat{\Gamma}_{\gamma}}\right)\otimes\frac{F_{I_d}}{\widehat{\Gamma}_{\gamma^{\vee}}}\\
	&+\sum_{\substack{c\in C,d\in C^{\circ} \\ |I|=\operatorname{rk}N}}\xi_{c,d,I}\operatorname{Vol}_I\frac{D_{I_c}}{\widehat{\Gamma}_{\gamma}}\otimes\left(D_k\cdot\frac{F_{I_d}}{\widehat{\Gamma}_{\gamma^{\vee}}}\right)
\end{split}
\end{equation}
which follows from Corollary \ref{key}.
\end{proof}

\begin{remark}
Theorem \ref{main theorem 2} implies, in particular, that the pairing of Theorem \ref{main theorem 1} is nondegenerate and is independent of $v$. We are not aware of a direct proof of this fact.
\end{remark}

We conclude this section by an explanation of our motivation behind the definition of the coefficients $\xi_{c,d,I}$ in Theorem \ref{main theorem 1}. This definition is inspired by the following fan displacement resolution of diagonal formula of Fulton-Sturmfels \cite{FS}.
\begin{proposition}
	Let $X$ be the toric variety corresponds to a \textit{complete} fan $\Sigma$ in a lattice $N$, denote the diagonal embedding $X\hookrightarrow X\times X$ by $\delta$. Let $\sigma\in\Sigma$ be any cone and $v$ a generic point in $N$, then the diagonal class decomposes as
	\begin{align*}
		[\delta(V(\sigma))]=\sum_{\sigma_1,\sigma_2} m_{\sigma_1,\sigma_2}^{\sigma}\cdot[V(\tau_1)\times V(\tau_2)]
	\end{align*}
	where $m_{\sigma_1,\sigma_2}^{\sigma}=[N:N_{\sigma_1}+N_{\sigma_2}]$ and the sum is over all cones $\sigma_1,\sigma_2\in\Sigma$ with $\operatorname{codim}\sigma_1+\operatorname{codim}\sigma_2=\operatorname{codim}\sigma$ and $\sigma\subseteq\sigma_1,\sigma_2$ such that $(v+\sigma_1)\cap\sigma_2\not=\emptyset$.
\end{proposition}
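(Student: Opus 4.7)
The statement is the classical Fulton--Sturmfels fan displacement formula \cite{FS}, so the plan is to reproduce their strategy in outline: move the diagonal by a generic one-parameter subgroup acting on the second factor and count the resulting transverse intersections with Künneth products of torus-invariant cycles.

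First, I would invoke the Künneth decomposition $A^*(X \times X)_{\Q} \cong A^*(X)_{\Q} \otimes A^*(X)_{\Q}$, valid since $X$ is smooth, complete and toric with free Chow groups generated by the $[V(\tau)]$. Writing
$$
[\delta(V(\sigma))] = \sum_{\sigma_1, \sigma_2} c^{\sigma}_{\sigma_1, \sigma_2}\, [V(\sigma_1)] \times [V(\sigma_2)],
$$
only pairs with $\operatorname{codim}\sigma_1 + \operatorname{codim}\sigma_2 = \operatorname{codim}\sigma$ can contribute by degree. The task is then to identify $c^{\sigma}_{\sigma_1,\sigma_2}$ with the lattice index $[N : N_{\sigma_1} + N_{\sigma_2}]$ exactly when $\sigma \subseteq \sigma_1, \sigma_2$ and the displacement condition $(v + \sigma_1) \cap \sigma_2 \neq \emptyset$ holds.

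Second, I would deform the diagonal using the $T \times T$-action. For the one-parameter subgroup $\lambda_v \colon \C^* \to T$ attached to $v \in N$ and $t \in \C^*$, the cycle
$$
\delta_t(V(\sigma)) = \{(x, \lambda_v(t) \cdot x) : x \in V(\sigma)\}
$$
is rationally equivalent to $\delta(V(\sigma))$ via the flat $\C^*$-family compactified over $\PP^1$. For $v$ generic and $t$ in the appropriate limiting regime, I would show that $\delta_t(V(\sigma))$ meets each complementary-dimension product $V(\sigma_1) \times V(\sigma_2)$ only inside the open orbit stratum $O(\sigma_1) \times O(\sigma_2)$ and does so transversely; this is the geometric content of the genericity of $v$, and is most cleanly checked by restricting to the affine chart of a maximal cone containing $\sigma_1 \cup \sigma_2$.

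Third, a transverse point $(x, \lambda_v(t) x) \in O(\sigma_1) \times O(\sigma_2)$ with $x \in V(\sigma)$ forces $\sigma \subseteq \sigma_1$, and orbit invariance of $\lambda_v(t)$ forces $\sigma \subseteq \sigma_2$. Tracking the limiting behavior of $\lambda_v(t)$ on characters reduces the existence of such solutions to the numerical displacement condition $(v + \sigma_1) \cap \sigma_2 \neq \emptyset$, and a standard lattice-index computation identifies the local intersection multiplicity with $[N : N_{\sigma_1} + N_{\sigma_2}]$. The main obstacle is precisely this transversality/multiplicity step: rigorously ruling out contributions from deeper strata and computing the local index exactly. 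These are the technical heart of \cite{FS}, which we invoke rather than reprove, since the proposition enters our paper only as motivation for the definition of $\xi_{c,d,I}$ in Theorem \ref{main theorem 1}.
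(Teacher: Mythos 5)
The paper provides no proof of this proposition at all: it is stated as a known result of Fulton--Sturmfels and cited directly to \cite{FS}, serving only to explain the motivation behind the definition of the coefficients $\xi_{c,d,I}$. Your proposal correctly recognizes this and concludes by invoking the citation, which matches the paper's treatment; your preceding sketch of the FS displacement argument (K\"unneth decomposition, deforming the diagonal by a generic one-parameter subgroup, transversality and lattice-index bookkeeping) is an accurate summary of the original proof and is a reasonable bonus, though note that FS do not require $X$ to be smooth --- they establish the relevant K\"unneth-type decomposition in Chow groups for arbitrary complete toric varieties, so the ``smooth'' justification you give for the first step is slightly too strong.
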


Note that the coefficient $m_{\sigma_1,\sigma_2}^{\sigma}$ is exactly the volume $\operatorname{Vol}_{\sigma_1\cup\sigma_2}$ of the cone spanned by $\sigma_1$ and $\sigma_2$. This formula cannot be applied to our case directly, since the toric varieties they worked with are complete while ours are not. Nevertheless we have the following relationship between the definition of $\xi_{c,d,I}$ and the conditions occurred in Fulton-Sturmfels formula.

\begin{proposition}
Let $c,d\in\sigma_I$ and $v$ be a generic point in $C^{\circ}$. Then both $c+\varepsilon v$ and $d-\varepsilon v$ lies in $\sigma_I^{\circ}$ for all sufficiently small $\varepsilon>0$ if and only if
\begin{align*}
    (v+\sigma(c))\cap\sigma(d)\not=\emptyset
\end{align*}
where $\sigma(c)$ denotes the minimal cone of $\Sigma$ that contains $c$.
\end{proposition}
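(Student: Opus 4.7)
The plan is to analyze both sides of the equivalence using the simplicial basis $\{v_i\}_{i\in I}$ of $\sigma_I$ and to show that they reduce to the same sign constraints on the coordinates of $v$.

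First I would write $c=\sum_{i\in I}c_iv_i$, $d=\sum_{i\in I}d_iv_i$, and $v=\sum_{i\in I}v'_iv_i$ with $c_i,d_i\geq 0$, and set $I_c=\{i\in I:c_i>0\}$ and $I_d=\{i\in I:d_i>0\}$. Expanding coordinatewise, the condition $c+\varepsilon v\in\sigma_I^{\circ}$ for all sufficiently small $\varepsilon>0$ translates to $v'_i>0$ for every $i\in I\setminus I_c$, and analogously $d-\varepsilon v\in\sigma_I^{\circ}$ translates to $v'_i<0$ for every $i\in I\setminus I_d$. Together these conditions force $I_c\cup I_d=I$, pin down the signs of $v'_i$ on the symmetric difference $I_c\triangle I_d$, and leave $v'_i$ unconstrained on $I_c\cap I_d$.

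For the right-hand side I would search for $\alpha\in\sigma(c)$ and $\beta\in\sigma(d)$ with $\beta-\alpha=v$. In the transparent case where $\sigma_I$ is itself a cone of $\Sigma$---so that $\sigma(c)$ is the face of $\sigma_I$ spanned by $\{v_i:i\in I_c\}$ and $\sigma(d)$ the face spanned by $\{v_i:i\in I_d\}$---I would write $\alpha=\sum_{i\in I_c}\alpha_iv_i$ and $\beta=\sum_{i\in I_d}\beta_iv_i$ with $\alpha_i,\beta_i\geq 0$ and match coordinates in the $I$-basis. The resulting linear system has a nonnegative solution iff precisely the sign constraints from the first paragraph hold: on $I_c\cap I_d$ one takes $\alpha_i=\max(0,-v'_i)$ and $\beta_i=\max(0,v'_i)$, while indices in $I_d\setminus I_c$ force $\beta_i=v'_i\geq 0$ and indices in $I_c\setminus I_d$ force $\alpha_i=-v'_i\geq 0$. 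This establishes the equivalence in this case.

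The main obstacle is the general case when $\sigma_I$ is not a cone of $\Sigma$: then the minimal cone $\sigma(c)\in\Sigma$ containing $c$ need not be a face of $\sigma_I$. I would reduce to the previous case by exploiting that $c\in\sigma(c)^{\circ}\cap\sigma_I$ forces $\sigma(c)\cap\sigma_I$ to be a cone whose relative interior still contains $c$; combined with the genericity of $v\in C^{\circ}$, one can restrict the search for $\alpha,\beta$ to $\sigma(c)\cap\sigma_I$ and $\sigma(d)\cap\sigma_I$ without loss. This combinatorial reduction---ensuring that the face-of-$\sigma_I$ picture that drives the nice case still governs the equivalence in general---is the delicate step, where the hypotheses that $v$ is generic in $C^{\circ}$ and that $\Sigma$ subdivides $C$ come together.
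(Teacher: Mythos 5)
Your ``transparent case'' is exactly the paper's argument, just with more coordinatewise detail. The paper writes $c+\varepsilon v=\sum_{i\in I}s_iv_i$ with $s_i\in(0,1)$ (the upper bound uses the implicit hypothesis $c+d=v_I$, so that $s_i<1$ follows from $d-\varepsilon v\in\sigma_I^{\circ}$), then rewrites $\varepsilon v=\sum_i(s_i-c_i)v_i$ and reads off the sign constraints on $s_i-c_i$ to extract $v=v_2-v_1$ with $v_1\in\sigma(c)$, $v_2\in\sigma(d)$. Your index sets $I_c=\{i:c_i>0\}$ and $I_d=\{i:d_i>0\}$ are precisely the paper's $\sigma(c)$ and $\sigma(d)$ (as subsets of $I$), and your decomposition $I=I_c\cup I_d$ with overlap $I_c\cap I_d=\sigma(\gamma(c))$ is the paper's $I=I_c\sqcup I_d\sqcup\sigma(\gamma(c))$, where the paper reserves $I_c,I_d$ for $\{i:c_i=1\}$ and $\{i:d_i=1\}$. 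The discrepancy is purely notational; the analysis coincides.

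Your worry about $\sigma_I\notin\Sigma$ is legitimate as a reading of the literal statement: without the implicit hypotheses that $c+d=v_I$ and that $\sigma(c),\sigma(d)$ are cones of $\Sigma$ contained in $I$ with $\sigma(c)\cup\sigma(d)=I$, the equivalence can fail (e.g.\ take $c=d$ in the interior of a cone of $\Sigma$ spanned by a lattice point not indexed by $I$). But the paper's own proof assumes exactly these hypotheses --- the phrase ``Recall that $I=\sigma(c)\cup\sigma(d)=I_c\sqcup I_d\sqcup\sigma(\gamma(c))$'' inherits the setting of Propositions \ref{most0} and \ref{contributions}, where only such triples $(c,d,I)$ occur. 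So your ``main obstacle'' is out of scope, and the sketched reduction via $\sigma(c)\cap\sigma_I$ is neither needed nor, as far as I can tell, would it close the gap in general; but for the case the paper intends, your argument is complete and matches the paper's.
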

\begin{proof}
 Assume both $c+\varepsilon v$ and $d-\varepsilon v$ lies in $\sigma_I^{\circ}$. Then we can write $c+\varepsilon v=\sum_{i\in I} s_i v_i$ where all $s_i\in(0,1)$. Recall that $I=\sigma(c)\cup\sigma(d)=I_c\sqcup I_d\sqcup\sigma(\gamma(c))$, this equation can be rewritten into the form $v=v_1-v_2$, where $v_1\in\sigma(c)$ and $v_2\in\sigma(d)$, which is equivalent to the second statement. The other direction can be proved similarly.
\end{proof}

\begin{remark}
We believe our methods should allow one to give a new proof of the Fulton-Sturmfels formula, which could be done by restricting our results to the twisted sectors that are compact. We do not go into details further in this paper.
\end{remark}

\section{Extensions and open questions}\label{sec.generic}
There is a more general version of the better-behaved GKZ systems which includes a parameter $\beta\in N_\C$, 
with $\beta=0$ case being the one we considered so far. Namely, the torus homogeneity equations of Definition \ref{defGKZ} read
$$
\sum_{i=1}^n\langle\mu,v_i\rangle x_i\partial_i\Phi_c+\langle\mu,c-\beta\rangle\Phi_c=0
$$
and similarly for $\Psi_d$. Much of what we did in this paper is applicable to the pair of better behaved GKZ systems with parameters $\pm\beta$. For instance, we readily observe that our argument in Section \ref{sec.pairing} goes through for arbitrary parameter $\beta$ to give a pairing between spaces of solutions to $\operatorname{bbGKZ}(C,\beta)$ and $\operatorname{bbGKZ}(C^{\circ},-\beta)$.

\smallskip
We would like to see what happens in the limit given by a regular subdivision $\Sigma$ for a generic $\beta$.
While there are certain versions of $H$ and $H^c$ considered in \cite{horja2013toric} it will be easier for our purposes to simply write ${\rm Vol}(\Delta)$ linearly independent solutions given by $\Gamma$-series, essentially along the lines of the solutions of the original GKZ paper \cite{GKZoriginal}.

\smallskip
Let $\Sigma$ be a regular subdivision of $C$. For each maximum-dimensional cone $\sigma$ we consider ${\rm Vol}(\sigma)$ linearly independent solutions in the large K\"ahler limit of $\PP_\Sigma$, in bijection with the elements $\gamma$ of $N/\sum_{i\in\sigma}\Z v_i$. Namely, we 
%write
%$$
%\beta = \sum_{i\in \sigma} \beta_i v_i 
%$$
%and 
define the set $L_{c,\gamma,\sigma;\beta}\subset \C^n$ by
$$
\sum_{i=1}^n l_i v_i = \beta- c
$$
and the properties $l_i\in \Z$ for all $i\not\in\sigma$ and 
$c+\sum_{i\not\in \sigma} l_i v_i  = -\gamma\hskip -3pt \mod \sum_{i\in\sigma}\Z v_i.$
Then for each $\gamma$ we define a solution $\Phi^{\gamma,\sigma}$ of $\operatorname{bbGKZ}(C,\beta)$ by 
$$
\Phi^{\gamma,\sigma}_c(x_1,\ldots,x_n) = \sum_{l\in L_{c,\gamma,\sigma;\beta} } \prod_{i=1}^n \frac{x_i^{l_i}}{\Gamma(1+l_i)}.
$$ 
We define $\Gamma$-series solutions $\Psi^{\gamma,\sigma}$ to $\operatorname{bbGKZ}(C^{\circ},-\beta)$ in the same way
by
$$
\Psi^{\gamma,\sigma}_d(x_1,\ldots,x_n) = \sum_{l\in L_{d,\gamma,\sigma;-\beta} } \prod_{i=1}^n \frac{x_i^{l_i}}{\Gamma(1+l_i)}.
$$
Note that in the case of generic $\beta$ every solution of  $\operatorname{bbGKZ}(C^{\circ},-\beta)$ 
can be uniquely extended to solutions of $\operatorname{bbGKZ}(C,-\beta)$.
It is not hard to show that these $\Phi_c$  and $\Psi_d$ converge uniformly on compacts in the region \eqref{region} for an appropriate choice of $\hat\psi$. Moreover, as $\sigma$ and $\gamma$ vary, we get bases of the space of solutions, with linear independence assured by them lying in different eigenspaces of the monodromy operators for small loops around $x_i=0$. 

\smallskip
Monodromy considerations imply that for the pairing $\langle\cdot,\cdot\rangle$ of Section \ref{sec.pairing} we have
$
\langle \Phi^{\gamma,\sigma},\Psi^{\gamma',\sigma'}\rangle=0
$
unless $\sigma= \sigma'$ and $\gamma=-\gamma'\hskip -3pt\mod \sum_{i\in\sigma}\Z v_i$. In the latter case, 
the constant contribution will happen for $l_i+l_i'=0$ for $i\not\in I$ and $l_i+l_i' = -1$ for $i\in I$. If any of $l_i, l_i'$ is a negative integer, then the corresponding term vanishes, due to a pole of $\Gamma$,
so we may assume that they are nonnegative for $i\not\in \sigma$, which then implies that 
$$I = \sigma;~l_i+l_i'=-1, {\rm~for~} i\in\sigma; ~l_i=l_i'=0 {\rm~for~}i\not\in\sigma.$$
This implies that 
$c=-\gamma\hskip -3pt \mod \sum_{i\in\sigma}\Z v_i$ and 
$d=\gamma\hskip -3pt \mod \sum_{i\in\sigma}\Z v_i$.

\smallskip
We claim that for any $\gamma$ there exists exactly one  pair $(c,d)$ in $\sigma$ satisfying this constraint and  $\xi_{c,d,\sigma}\not=0$. 
The definition of the coefficients $\xi$ of the pairing implies that we must also have  
$c+d = \sum_{i\in \sigma} v_i$ with $c+\varepsilon v$ and $d-\varepsilon v$ in the corresponding cone $\sum_{i\in \sigma} \R_{\geq 0}v_i$ for all small $\varepsilon>0$. 
We can write $\beta$, $v$ and $\gamma$ uniquely as
$$
\beta = \sum_{i\in \sigma}\beta_i v_i,~v= \sum_{i\in \sigma}s_i v_i,~\gamma = \sum_i \gamma_i v_i 
$$
with $\gamma_i \in [0,1)$. It is then easy to see that $\xi_{c,d,\sigma}$ is nonzero if and only if
\begin{align*}
	c&=\sum_{\{i:\gamma_i\neq 0\}}(1-\gamma_i)v_i + \sum_{\{i:\gamma_i=0,s_i<0\}}v_i,\\
	d&=\sum_{\{i:\gamma_i\neq 0\}}\gamma_iv_i + \sum_{\{i:\gamma_i=0,s_i>0\}}v_i.
\end{align*}
Thus for $\gamma_i\neq 0$ we have 
$
l_i = \beta_i -1+ \gamma_i, ~l_i' = -\beta_i -\gamma_i.
$
For $\gamma_i= 0$ and $s_i>0$ we have 
$
l_i = \beta_i,~ l_i' = -1-\beta_i
$
and for  $\gamma_i= 0$ and $s_i<0$ we have 
$
l_i = -1+\beta_i,~ l_i' = -\beta_i.
$
In particular, 
$$\deg(c) 
%= \sum_{i,\gamma_i\neq 0} (1-\gamma_i) +\#\{i:\gamma_i=0,s_i<0\}
=-\deg (\gamma) + \rk N - \#\{i:\gamma_i=0,s_i>0\} .$$

\smallskip
Therefore the pairing is given by
\begin{align*}
	\langle \Phi^{\gamma,\sigma},\Psi^{-\gamma,\sigma}\rangle&=
	(-1)^{\deg (c)}\,{\rm Vol(\sigma)}\prod_{\gamma_i\not=0}\frac 1{\Gamma(\beta_i+\gamma_i)\Gamma(1-\beta_i-\gamma_i)}
	\\
&	\prod_{\gamma_i=0,s_i>0}\frac 1{\Gamma(1+\beta_i)\Gamma(-\beta_i)}
	\prod_{\gamma_i=0,s_i<0}\frac 1{\Gamma(\beta_i)\Gamma(1-\beta_i)}
	\\
&=(-1)^{\deg (c)}\,{\rm Vol(\sigma)}\prod_{\gamma_i\not=0}\frac{\ee^{2\pi{\rm i}(\beta_i+\gamma_i)}-1}{2\pi{\rm i}\,\ee^{\pi{\rm i}(\beta_i+\gamma_i)}}
 \\
&\prod_{\gamma_i=0,s_i>0}\frac{\ee^{2\pi{\rm i}(\beta_i+1)}-1}{2\pi{\rm i}\,\ee^{\pi{\rm i}(\beta_i+1)}}
\prod_{\gamma_i=0,s_i<0}\frac{\ee^{2\pi{\rm i}\beta_i}-1}{2\pi{\rm i}\,\ee^{\pi{\rm i}\beta_i}}
\\
&=\frac{(-1)^{\deg (c)}
\,{\rm Vol(\sigma)}}{(2\pi{\rm i})^{\rk N}}\ee^{-\pi{\rm i}\sum_{i\in\sigma}(\beta_i+\gamma_i)}\prod_{\gamma_i=0,s_i>0}\ee^{\pi{\rm i}}\prod_{i\in\sigma}(\ee^{2\pi{\rm i}(\beta_i+\gamma_i)}-1)
\\
&=\frac{{\rm Vol(\sigma)}}{(2\pi{\rm i})^{\rk N}}\ee^{-\pi{\rm i}\deg (\beta) - 2\pi \ii \deg (\gamma)}\prod_{i\in\sigma}(1-\ee^{2\pi{\rm i}(\beta_i+\gamma_i)})\\
&=\frac{e^{-\pi{\rm i}\deg(\beta)}{\rm Vol(\sigma)}}{(2\pi{\rm i})^{\rk N}}\prod_{i\in\sigma}(1-e^{2\pi{\rm i}(\beta_i+\gamma_i)}).
\end{align*}
\begin{remark}
An immediate consequence of the above calculation is that the pairing $\langle\cdot,\cdot\rangle$  is non-degenerate for a generic $\beta$.
\end{remark}

\medskip
{\bf Further directions.}
We conclude this section by stating some open problems related to our construction, in no particular order. 

\begin{itemize}
\item Is the pairing of this paper nondegenerate for all $\beta$? We know this to be the case for $\beta=0$ and $\beta$ generic, and it seems likely to be always true.

\item
We would like to settle the analytic continuation conjecture of \cite{BHconj} to extend the main result 
of \cite{MellinBarnes} to the better-behaved GKZ systems. One consequence of Theorem \ref{main theorem 2} is that it should be enough to just work with the usual $K$-theory and the compactly supported version should follow from duality.

\item
What is the HMS counterpart of our pairing from the point of view of Fukaya-Seidel categories for the mirror potential? Our formula for the pairing is quite simple, so presumably so should be the mirror version of it. We refer to \cite{Fukaya}, \cite{Seidel} for background.

\item
Solutions to bbGKZ systems come with a lattice structure inherited from the $K$-theory of $\PP_\Sigma$ (it is independent of $\Sigma$). Can this structure be locally defined outside of the region of convergence of any $\Gamma$-series?

\end{itemize}

\end{document}